\documentclass[11pt]{article}
\usepackage[margin=1in]{geometry}
\usepackage{amsmath,amssymb,amsthm,mathtools,mathrsfs}
\usepackage[T1]{fontenc}
\usepackage{lmodern}
\usepackage{microtype}
\usepackage{hyperref}
\usepackage{enumitem}

\newtheorem{theorem}{Theorem}[section]
\newtheorem{proposition}[theorem]{Proposition}
\newtheorem{lemma}[theorem]{Lemma}
\newtheorem{corollary}[theorem]{Corollary}
\theoremstyle{definition}
\newtheorem{definition}[theorem]{Definition}
\theoremstyle{remark}
\newtheorem{remark}[theorem]{Remark}

\newcommand{\Q}{\mathbb Q}
\newcommand{\R}{\mathbb R}
\newcommand{\C}{\mathbb C}

\newcommand{\Z}{\mathbb Z}
\newcommand{\A}{\mathbb A}
\newcommand{\Id}{\mathrm{Id}}

\newcommand{\Hilb}[2]{(#1,#2)}
\newcommand{\leg}[2]{\left(\frac{#1}{#2}\right)}

\newcommand{\Sch}{\mathcal S}
\newcommand{\sgn}{\operatorname{sgn}}
\newcommand{\diag}{\operatorname{diag}}
\newcommand{\Span}{\operatorname{Span}}

\title{Reciprocity and the Maslov Phase}
\author{Jonathan Holland}
\date{\today}

\begin{document}
\maketitle

\begin{abstract}
We give a metaplectic proof of Hilbert reciprocity, and hence of quadratic reciprocity, in which the local phase is the Kashiwara--Maslov phase of a triple of Lagrangians \cite{LionVergne,ThomasMaslov}.  In rank two the phase of the ordered triple $(L_\infty,L_a,L_0)$ is the one-dimensional Weil index $\gamma_v(a)$.  The local Hilbert symbol appears as the defect of strict multiplicativity of these phases:
\[
\Hilb{a}{b}_v
=
\frac{\gamma_v(a)\gamma_v(b)}{\gamma_v(1)\gamma_v(ab)}.
\]
The global step compares the local and adelic realizations of a single Bruhat word for the diagonal torus elements $m(a)\in \mathrm{SL}_2(\Q)$.  Locally the raw Bruhat-word lift carries the normalization factor determined by the chosen quadratic convention.  These operators form a projective representation of the diagonal torus with defect
\[
\mu_v(a,b)
=
\frac{\gamma_v(a)\gamma_v(b)}{\gamma_v(1)\gamma_v(ab)}.
\]
For rational adelic data, the normalized Bruhat word is multiplicative.  The reciprocity law states that the total defect $\prod_v\mu_v(a,b)$ is $1$.  Combined with the local bridge above, this yields Hilbert reciprocity, while quadratic reciprocity is then the specialization to the pair of odd primes $(p,q)$.
\end{abstract}

\section{Introduction}

Quadratic reciprocity is usually presented either as a theorem about quadratic residues or as an identity between Gauss sums. While elementary, these formulations tend to make the reciprocity sign look like the output of a clever calculation. This paper gives a metaplectic interpretation of the reciprocity laws.  The local quantity is the Kashiwara--Maslov phase attached to a triple of Lagrangians in a symplectic plane \cite{LionVergne,ThomasMaslov}.  In rank two, for the ordered triple
\[
(L_\infty,L_a,L_0),
\]
that phase is the one-dimensional Weil index $\gamma_v(a)$.  The local Hilbert symbol then appears as the failure of these phases to multiply strictly:
\[
\Hilb{a}{b}_v
=
\frac{\gamma_v(a)\gamma_v(b)}{\gamma_v(1)\gamma_v(ab)}.
\]
Reciprocity states that the \emph{total Maslov defect} of a rational loop is trivial \cite{LionVergne}.

The classical sign is the contribution at the place $2$ to the product of local metaplectic phases.  The local normalized Bruhat-word lifts define a projective system, but the global product has trivial defect.

The proof proceeds in four steps.
\begin{enumerate}[label=\arabic*.]
\item In a symplectic plane, the Kashiwara quadratic form of a triple of pairwise transverse Lagrangians is one-dimensional, and for the ordered triple $(L_\infty,L_a,L_0)$ it is the form $\langle a\rangle$ \cite{ThomasMaslov,PerrinMaslov}.
\item The normalized triple intertwiner in the Schr\"odinger models of the Heisenberg representation is the Weil index of that Kashiwara form \cite{Weil}.  Thus the local Kashiwara--Maslov phase of $(L_\infty,L_a,L_0)$ is $\gamma_v(a)$.
\item For each place $v$ of $\Q$, the local multiplicativity defect of the Weil index is the Hilbert symbol:
\[
\gamma_v(a)\gamma_v(b)=\gamma_v(1)\gamma_v(ab)\Hilb{a}{b}_v.
\]
The local Hilbert symbol is the multiplicativity defect of the local Maslov phases.
\item The global cancellation law is proved at the level of the defect cocycle.  For every place $v$, the normalized local Bruhat-word operators attached to the diagonal torus form a projective representation whose defect is
\[
\mu_v(a,b)=\frac{\gamma_v(a)\gamma_v(b)}{\gamma_v(1)\gamma_v(ab)}.
\]
On rational adelic data, the normalized Bruhat word is multiplicative.  Comparing the local and global multiplication laws shows that $\prod_v\mu_v(a,b)=1$.
\end{enumerate}

The proof identifies reciprocity with cancellation of the local Maslov defects.  The local normalized Bruhat-word lifts form a projective system, and rational adelic data have trivial total defect.

The rank-two identification of the Maslov cocycle with the one-dimensional Weil index is a standard metaplectic input. The global product formula is proved in the adelic Schr\"odinger model from the defect cocycle of the local Bruhat-word operators. The global inputs are the triviality of the standard additive character on $\Q$, the product formula $|a|_\A=1$, and Poisson summation for $\Q\subset\A$.

The final specialization to distinct odd primes $p$ and $q$ recovers quadratic reciprocity by evaluating the four places that can contribute: $p$, $q$, $2$, and $\infty$.  The two odd primes contribute the Legendre symbols, the real place is trivial, and the classical sign
\[
(-1)^{\frac{(p-1)(q-1)}4}
\]
is the $2$-adic component of the global cancellation law. The paper thus derives the reciprocity laws and identifies the geometric source of their signs.

\section{Classical warm-up on the real line}\label{sec:warmup}

Before turning to the local and adelic Maslov formalism, we examine the corresponding classical calculation in the ordinary Schr\"odinger model on $L^2(\R)$.  In this warm-up, quadratic reciprocity appears as a statement about metaplectic transport between rational Lagrangians, coupled to the lattice state
\[
\Theta:=\sum_{n\in\Z}\delta_n.
\]
Gauss sums appear as matrix coefficients of real metaplectic operators on finite residue sectors of rational lattice states.

Let
\[
\omega\big((x,\xi),(x',\xi')\big)=x\xi'-x'\xi
\]
be the standard symplectic form on $\Q^2\subset \R^2$, and for $t\in\Q\cup\{\infty\}$ write
\[
L_t:=\Q(1,t),\qquad L_\infty:=\Q(0,1).
\]
Thus a rational slope $t=a/c$ determines a rational Lagrangian line $L_{a/c}$.  The real metaplectic representation is generated by the standard lifts of
\[
S=\begin{pmatrix}0&1\\-1&0\end{pmatrix},\qquad
T_t=\begin{pmatrix}1&t\\0&1\end{pmatrix},\qquad
D_a=\begin{pmatrix}a&0\\0&a^{-1}\end{pmatrix},
\]
acting on Schwartz functions and tempered distributions by
\[
(\mathcal Ff)(x)=\int_\R e^{2\pi ixy}f(y)\,dy,
\qquad
(\mathcal M_t f)(x)=e^{\pi i t x^2}f(x),
\qquad
(\mathcal D_a f)(x)=|a|^{1/2}f(ax).
\]
Here $\mathcal M_t$ is the quadratic Schr\"odinger evolution, while Poisson summation gives the fixed-vector relation
\[
\mathcal F\Theta=\Theta.
\]
The theta-comb couples the continuous metaplectic representation to the arithmetic lattice.

The real model also contains a projective sign attached to triples of rational lines.  If $v_t=(1,t)$ and $v_\infty=(0,1)$, then for three pairwise transverse rational lines one may form the sign
\[
\kappa(L_{t_1},L_{t_2},L_{t_3})
=
\sgn\!\Big(
\omega(v_{t_1},v_{t_2})\,
\omega(v_{t_2},v_{t_3})\,
\omega(v_{t_3},v_{t_1})
\Big).
\]
In dimension one this is the sign part of the Kashiwara index: it records the cyclic ordering of three real Lagrangians.  This sign does not encode arithmetic data such as $\leg{a}{c}$.  The arithmetic enters through the action of metaplectic transport toward a rational line on the lattice state $\Theta$.

For an odd integer $c>0$, the denominator-$c$ finite sector is generated by the residue-class combs.  For $r\in \Z/c\Z$, set
\[
E_r^{(c)}:=
\sum_{k\in\Z}
\delta_{r+kc}.
\]
Then
\[
W_c:=\Span\{E_r^{(c)}:r\in \Z/c\Z\}
\]
is preserved by the quadratic phase $\mathcal M_{2a/c}$, and one has
\[
\mathcal M_{2a/c}E_r^{(c)}
=
\exp\!\left(2\pi i\frac{a r^2}{c}\right)E_r^{(c)}.
\]
The factor $2a/c$ is forced by our real metaplectic convention
\[
(\mathcal M_t f)(x)=e^{\pi i t x^2}f(x).
\]
It is also what makes the phase descend to $\Z/c\Z$; for odd $c$, the expression $e^{\pi i a r^2/c}$ would not be well-defined modulo $c$.

Fourier transform carries the residue combs to the dual denominator-$c$ combs:
\[
\mathcal F E_r^{(c)}
=
\frac1c\sum_{m\in\Z}e^{2\pi i rm/c}\,\delta_{m/c}.
\]
After the natural rescaling of $c^{-1}\Z/\Z$ back to $\Z/c\Z$, the induced finite operator is the usual finite Fourier matrix
\[
(\mathcal F_c)_{m,r}=c^{-1/2}e^{2\pi i rm/c}.
\]
Thus the real metaplectic word $\mathcal F\mathcal M_{2a/c}$ has, on the denominator-$c$ finite quotient, the operator
\[
\mathcal F_c\circ \diag\!\left(e^{2\pi i a r^2/c}\right)_{r\bmod c}.
\]
The mod-$c$ quadratic exponential sum is the distinguished coefficient of this finite operator on the finite residue sector determined by the lattice.

For this warm-up we use a fixed word, rather than an arbitrary lift.  Define
\[
\Gamma(L_\infty,L_{2a/c})
\]
to be the unnormalized coefficient of the unshifted residue class in the finite operator
\[
\mathcal F_c\circ \diag\!\left(e^{2\pi i a r^2/c}\right)_{r\bmod c},
\]
after the natural rescaling of $c^{-1}\Z/\Z$ back to $\Z/c\Z$.  Thus the finite word, the residue-sector normalization, and the lift are fixed by definition.  Unwinding the coefficient gives the ordinary quadratic Gauss sum
\[
\mathcal G(a,c):=\sum_{x\, (\mathrm{mod}\, c)} e^{2\pi i a x^2/c}.
\]
Equivalently, the unitary finite matrix coefficient is $c^{-1/2}\mathcal G(a,c)$, while the lattice-comb normalization used for $\Gamma$ is the unnormalized coefficient
\[
\Gamma(L_\infty,L_{2a/c})=\mathcal G(a,c).
\]
The notation $\Gamma(L_\infty,L_{2a/c})$ records the geometric origin of this fixed transport coefficient.

The same coefficient can be described in the residue basis.  Transport the comb $\Theta=\sum_{n\in\Z}\delta_n$ from the $L_\infty$-model to the $L_{2a/c}$-model by the normalized metaplectic operator, and project to the denominator-$c$ residue sector.  In the basis $\{E_r^{(c)}\}_{r\bmod c}$, followed by the natural rescaling of $c^{-1}\Z/\Z$ back to $\Z/c\Z$, the coefficient of the unshifted residue class is $\mathcal G(a,c)$.

We use two classical identities.

\smallskip
\noindent\textbf{Variation in the numerator.}
If $c$ is odd and $(a,c)=1$, then
\[
\Gamma(L_\infty,L_{2a/c})
=
\sum_{x\, (\mathrm{mod}\, c)} e^{2\pi i a x^2/c}
=:\mathcal G(a,c).
\]
The classical identity
\[
\mathcal G(a,c)=\leg{a}{c}\mathcal G(1,c)
\]
expresses the dependence on the numerator.  If $a\equiv u^2\pmod c$, the change of variables $x\mapsto ux$ gives
\[
\mathcal G(a,c)=\mathcal G(1,c).
\]
For general $(a,c)=1$, the standard transformation law for quadratic Gauss sums gives the Jacobi-symbol factor.  Geometrically, for fixed odd denominator $c$, moving the target line $L_{2/c}$ through the rational family $L_{2a/c}$ multiplies the lattice response by this factor:
\[
\Gamma(L_\infty,L_{2a/c})=\leg{a}{c}\,\Gamma(L_\infty,L_{2/c}).
\]

\smallskip
\noindent\textbf{Evaluation at $a=1$.}
The theta transformation law
\[
\vartheta(-1/z)=(-iz)^{1/2}\vartheta(z)
\]
shows that for odd $c>0$,
\[
\Gamma(L_\infty,L_{2/c})=\mathcal G(1,c)=\varepsilon_c\sqrt c,
\qquad
\varepsilon_c=
\begin{cases}
1,&c\equiv 1\pmod 4,\\
 i,&c\equiv 3\pmod 4.
\end{cases}
\]
Here $\sqrt c$ is the cusp-scaling factor and $\varepsilon_c$ is the metaplectic square-root phase for transport from $L_\infty$ to the line $L_{2/c}$. Equivalently, comparing the geometric and Fourier-theoretic asymptotics of the theta function at the corresponding odd cusp recovers the standard evaluation
\[
\sum_{r\, (\mathrm{mod}\, c)} e^{2\pi i r^2/c}=\varepsilon_c\sqrt c.
\]
The unitary finite coefficient is $c^{-1/2}\mathcal G(1,c)$, while the lattice-comb coefficient $\Gamma(L_\infty,L_{2/c})$ is the unnormalized Gauss sum.

Combining the two laws gives, for odd $c$ and $(a,c)=1$,
\[
\Gamma(L_\infty,L_{2a/c})=\leg{a}{c}\,\varepsilon_c\sqrt c.
\]
The formula separates the rational line, the metaplectic square-root phase, and the quadratic character.

The finite sum is the coordinate expression of a metaplectic transport coefficient for rational Lagrangians tested against the integer lattice.  The calculation identifies this coefficient with the classical Gauss sum; the adelic argument below explains the product law for the associated local defects.

For distinct odd primes $p$ and $q$, the Chinese remainder theorem identifies the denominator-$pq$ residue sector $W_{pq}$ with $W_p\otimes W_q$.  Under this decomposition, and after the square changes of variables that replace $q^{-1}$ by $q$ modulo $p$ and $p^{-1}$ by $p$ modulo $q$, the Gauss sum factors as
\[
\mathcal G(1,pq)=\mathcal G(q,p)\mathcal G(p,q).
\]
Equivalently,
\[
\Gamma(L_\infty,L_{2/pq})
=
\Gamma(L_\infty,L_{2q/p})\,\Gamma(L_\infty,L_{2p/q}).
\]
Substituting the preceding formula gives
\[
\varepsilon_{pq}\sqrt{pq}
=
\leg{q}{p}\,\varepsilon_p\sqrt p\;\leg{p}{q}\,\varepsilon_q\sqrt q,
\]
and hence
\[
\leg{p}{q}\leg{q}{p}
=
\frac{\varepsilon_{pq}}{\varepsilon_p\varepsilon_q}
=
(-1)^{\frac{(p-1)(q-1)}{4}}.
\]
In the real model, the reciprocity sign is the quotient of the metaplectic square-root phases attached to $L_{2/p}$, $L_{2/q}$, and $L_{2/pq}$.

The warm-up identifies the classical Gauss sums as metaplectic transport coefficients.  The following sections replace the finite calculation by the local Maslov formalism and the adelic cancellation law.

\section{Kashiwara forms for triples of slopes}

Let $K$ be a field of characteristic different from $2$, let
\[
V=K^2,
\qquad
\omega\big((x,y),(x',y')\big)=xy'-yx',
\]
and for $a\in K$ set
\[
e_a:=(1,a),\qquad L_a:=K e_a.
\]
We also write
\[
e_\infty:=(0,1),\qquad L_\infty:=K e_\infty.
\]
Then
\[
\omega(e_a,e_b)=b-a,
\qquad
\omega(e_\infty,e_a)=-1.
\]

\begin{definition}
For a triple of Lagrangians $L_1,L_2,L_3\subset V$, define
\[
K(L_1,L_2,L_3):=\{(x_1,x_2,x_3)\in L_1\oplus L_2\oplus L_3:x_1+x_2+x_3=0\}.
\]
On this subspace define the \emph{Kashiwara quadratic form}
\[
q_{L_1,L_2,L_3}(x_1,x_2,x_3):=\omega(x_1,x_2).
\]
Since $x_1+x_2+x_3=0$, one has
\[
\omega(x_1,x_2)=\omega(x_2,x_3)=\omega(x_3,x_1),
\]
so the definition is independent of the chosen pair.
\end{definition}

\begin{proposition}\label{prop:finite-slopes}
Let $a,b,c\in K$ be distinct.  Then $K(L_a,L_b,L_c)$ is one-dimensional, spanned by
\[
\xi_{a,b,c}:=\big((b-c)e_a,(c-a)e_b,(a-b)e_c\big),
\]
and
\[
q_{L_a,L_b,L_c}(\xi_{a,b,c})=-(a-b)(b-c)(c-a).
\]
Hence
\[
q_{L_a,L_b,L_c}\cong\left\langle -(a-b)(b-c)(c-a)\right\rangle.
\]
\end{proposition}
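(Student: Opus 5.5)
Write a general element of $L_a\oplus L_b\oplus L_c$ as $(\alpha e_a,\beta e_b,\gamma e_c)$ with $\alpha,\beta,\gamma\in K$. The condition $x_1+x_2+x_3=0$ is a pair of linear equations, one per coordinate of $V=K^2$:
\[
\alpha+\beta+\gamma=0,\qquad \alpha a+\beta b+\gamma c=0 .
\]
These are two linear conditions on $(\alpha,\beta,\gamma)\in K^3$. The coefficient matrix has rows $(1,1,1)$ and $(a,b,c)$, and it has rank $2$ because $a,b,c$ are distinct. For example, its $2\times2$ minor on the first two columns is $b-a\neq0$. So the solution space is one-dimensional, and it is spanned by the cross product of the two rows, which is $(b-c,\;c-a,\;a-b)$. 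One checks directly that this vector solves both equations. The first sum telescopes to $0$. The second is $a(b-c)+b(c-a)+c(a-b)=0$. The spanning vector $\xi_{a,b,c}$ is nonzero, since $b-c\neq0$.

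Next, I evaluate the form using $\omega(e_a,e_b)=b-a$ from the setup:
\[
q(\xi_{a,b,c})=\omega\big((b-c)e_a,(c-a)e_b\big)=(b-c)(c-a)(b-a)=-(a-b)(b-c)(c-a).
\]
Because the space is one-dimensional and spanned by $\xi_{a,b,c}$, the form satisfies $q(t\xi_{a,b,c})=t^2q(\xi_{a,b,c})$. This means $q$ is isometric to $\langle -(a-b)(b-c)(c-a)\rangle$. The value is nonzero, so the form is nondegenerate.

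No step is a real obstacle. The only point that needs care is the sign convention: the result depends on the ordering $\omega(x_1,x_2)$ and on $\omega(e_a,e_b)=b-a$. I would double-check it against the equivalent expressions $\omega(x_2,x_3)$ and $\omega(x_3,x_1)$. For instance, $\omega(x_2,x_3)=(c-a)(a-b)(c-b)$, which is the same product.
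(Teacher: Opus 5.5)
Your proof is correct and is essentially the paper's argument: you reduce to the two linear equations on the coefficients, take $(b-c,c-a,a-b)$ as the spanning solution, and evaluate $\omega$ on the first two components via $\omega(e_a,e_b)=b-a$. Your rank-$2$ minor argument makes the one-dimensionality slightly more explicit than the paper's elimination of $t_3$; the only nit is that $(1,1,1)\times(a,b,c)$ is actually $-(b-c,c-a,a-b)$, which is harmless because you verify the solution directly.
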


\begin{proof}
Write
\[
x_1=t_1 e_a,\qquad x_2=t_2 e_b,\qquad x_3=t_3 e_c.
\]
The relation $x_1+x_2+x_3=0$ becomes
\[
t_1+t_2+t_3=0,
\qquad
at_1+bt_2+ct_3=0.
\]
Eliminating $t_3$ gives
\[
(a-c)t_1+(b-c)t_2=0.
\]
A nonzero solution is
\[
t_1=b-c,\qquad t_2=c-a,\qquad t_3=a-b,
\]
so $\xi_{a,b,c}$ spans $K(L_a,L_b,L_c)$.

Now
\begin{align*}
q_{L_a,L_b,L_c}(\xi_{a,b,c})
&=\omega\big((b-c)e_a,(c-a)e_b\big)\\
&=(b-c)(c-a)\omega(e_a,e_b)\\
&=(b-c)(c-a)(b-a)\\
&=-(a-b)(b-c)(c-a).
\end{align*}
This gives the stated coefficient.
\end{proof}

\begin{proposition}\label{prop:infty-slope}
Let $a,b\in K$ be distinct.  Then $K(L_\infty,L_a,L_b)$ is one-dimensional, spanned by
\[
\xi_{\infty,a,b}:=\big((b-a)e_\infty,e_a,-e_b\big),
\]
and
\[
q_{L_\infty,L_a,L_b}(\xi_{\infty,a,b})=a-b.
\]
Hence
\[
q_{L_\infty,L_a,L_b}\cong\langle a-b\rangle.
\]
In particular,
\[
q_{L_\infty,L_\alpha,L_0}\cong\langle \alpha\rangle.
\]
\end{proposition}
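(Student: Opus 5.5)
We follow the same pattern as the proof of Proposition~\ref{prop:finite-slopes}: parametrize the summands, solve the linear constraint, and evaluate $\omega$ on the spanning vector. We write
\[
x_1=t_1e_\infty=(0,t_1),\qquad x_2=t_2e_a=(t_2,t_2a),\qquad x_3=t_3e_b=(t_3,t_3b).
\]
The condition $x_1+x_2+x_3=0$ then becomes the linear system
\[
t_2+t_3=0,\qquad t_1+at_2+bt_3=0.
\]
The first equation forces $t_3=-t_2$, and the second then gives $t_1=(b-a)t_2$. So the solution space is the line $t_2\in K$, and it is spanned by $(t_1,t_2,t_3)=(b-a,1,-1)$, which is exactly $\xi_{\infty,a,b}$. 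A more conceptual check of dimension one: the summation map $L_\infty\oplus L_a\oplus L_b\to V$ goes from a $3$-dimensional space onto a $2$-dimensional one, because $L_a$ and $L_b$ are already transverse when $a\neq b$. Its kernel $K(L_\infty,L_a,L_b)$ is therefore $1$-dimensional.

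Next we evaluate the form using the first pair of entries and the identity $\omega(e_\infty,e_a)=-1$ recorded before the definition:
\[
q_{L_\infty,L_a,L_b}(\xi_{\infty,a,b})=\omega\big((b-a)e_\infty,e_a\big)=(b-a)\,\omega(e_\infty,e_a)=a-b.
\]
As a cross-check, the pair $(x_2,x_3)$ gives $\omega(e_a,-e_b)=-(b-a)=a-b$, which agrees, as the definition guarantees. A quadratic form on a line spanned by a vector of value $a-b$ is isometric to $\langle a-b\rangle$. Note that $a-b\neq 0$, so this form is nondegenerate.

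For the final assertion we specialize to $a=\alpha$ and $b=0$, with $\alpha\neq0$ so that the slopes are distinct. This gives $q_{L_\infty,L_\alpha,L_0}\cong\langle\alpha\rangle$.

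There is no real obstacle here. The only place requiring care is the sign convention: we must use $\omega(e_\infty,e_a)=-1$ rather than $+1$, and keep the ordering of the triple, since reversing the orientation would change the class to $\langle b-a\rangle$.
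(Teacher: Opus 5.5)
Your proposal is correct and follows the paper's proof essentially verbatim. You use the same parametrization and the same linear system, obtaining $t_3=-t_2$ and $t_1=(b-a)t_2$, and the same evaluation $\omega\big((b-a)e_\infty,e_a\big)=-(b-a)=a-b$; your transversality argument for the dimension count and the cross-check via $\omega(x_2,x_3)$ are harmless extras.
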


\begin{proof}
Write
\[
x_1=s e_\infty,\qquad x_2=t e_a,\qquad x_3=u e_b.
\]
The equation $x_1+x_2+x_3=0$ gives
\[
t+u=0,
\qquad
s+at+bu=0.
\]
Hence $u=-t$ and $s=(b-a)t$.  Taking $t=1$ yields the spanning vector $\xi_{\infty,a,b}$.

Then
\begin{align*}
q_{L_\infty,L_a,L_b}(\xi_{\infty,a,b})
&=\omega\big((b-a)e_\infty,e_a\big)\\
&=(b-a)\omega(e_\infty,e_a)\\
&=-(b-a)=a-b.
\end{align*}
For $(\infty,\alpha,0)$ this specializes to $\langle\alpha\rangle$.  For comparison, the reversed finite-slope order gives $q_{L_\infty,L_0,L_\alpha}\cong\langle-\alpha\rangle$.
\end{proof}

\section{The local Maslov cocycle and the Weil index}

Let $F$ be a local field of characteristic different from $2$, let $\psi:F\to\C^\times$ be a nontrivial additive character, and let $V$ be a two-dimensional symplectic $F$-vector space.

For each Lagrangian $L\subset V$, let $\mathcal H_L$ denote the corresponding Schr\"odinger model of the Heisenberg representation with central character $\psi$.  If $L$ and $M$ are transverse, there is a canonically normalized intertwiner
\[
T_{M,L}:\mathcal H_L\longrightarrow \mathcal H_M.
\]

\begin{theorem}[Maslov cocycle = Weil index \cite{LionVergne,Weil}]\label{thm:maslov-weil}
Let $L_1,L_2,L_3$ be pairwise transverse Lagrangians in a two-dimensional symplectic space over $F$.  Then
\[
T_{L_3,L_2}T_{L_2,L_1}T_{L_1,L_3}
=
\gamma_\psi\big(q_{L_1,L_2,L_3}\big)\,\Id,
\]
where $q_{L_1,L_2,L_3}$ is the Kashiwara quadratic form of the triple and $\gamma_\psi(q)$ is the Weil index of the quadratic form $q$.
\end{theorem}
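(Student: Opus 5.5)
The plan is to reduce to a single standard triple and then compute one explicit composition of integral operators. First, the triple product is equivariant under $\mathrm{Sp}(V)$. The intertwiners $T_{M,L}$ are canonically normalized: they are unitary, and $T_{M,L}$ is defined by the integral $(T_{M,L}f)(h)=\int_{M/(L\cap M)} f(mh)\,dm$ with the self-dual measure. Hence for $g\in \mathrm{Sp}(V)$ the transported operators satisfy $g\,T_{M,L}\,g^{-1}=T_{gM,gL}$. So the product $T_{L_3,L_2}T_{L_2,L_1}T_{L_1,L_3}$ depends only on the $\mathrm{Sp}(V)$-orbit of the triple. The Kashiwara form $q_{L_1,L_2,L_3}$ is likewise invariant. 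The product is also a scalar, by Schur's lemma, because it is a self-intertwiner of the irreducible Heisenberg representation $\mathcal H_{L_3}$.

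In dimension two, $\mathrm{SL}_2(F)=\mathrm{Sp}(V)$ acts transitively on pairs of transverse lines. It therefore carries $(L_1,L_3)$ to $(L_\infty,L_0)$, and then $L_2$ to some $L_\alpha$ with $\alpha\neq0$. The stabilizer of $(L_\infty,L_0)$ is the diagonal torus, which acts by $\alpha\mapsto t^2\alpha$. So orbits correspond to square classes of $\alpha$, consistent with Proposition~\ref{prop:infty-slope}, which gives $q_{L_\infty,L_\alpha,L_0}\cong\langle\alpha\rangle$. It thus suffices to prove
\[
T_{L_0,L_\alpha}T_{L_\alpha,L_\infty}T_{L_\infty,L_0}=\gamma_\psi(\langle\alpha\rangle)\,\Id .
\]

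Next I realize all three models on functions of one variable, identifying each $\mathcal H_L$ with $\Sch(F)$ by restriction to a fixed complement. In these coordinates, $T_{L_\infty,L_0}$ and $T_{L_0,L_\alpha}$ become Fourier transforms $\widehat{\ }$ with respect to $\psi$ and the self-dual measure. The middle map $T_{L_\alpha,L_\infty}$ becomes multiplication by a quadratic character $\psi(\pm\tfrac12\alpha x^2)$ composed with a Fourier transform. Since the answer is a scalar, I only need the kernel of the composite at one point, or the composite applied to one convenient test function. The composite then reduces to an integral of the form
\[
\int_F \psi\big(\tfrac12\alpha x^2\big)\,\hat f(x)\,dx ,
\]
compared with $f(0)$ or an analogous value. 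By Weil's definition, $\gamma_\psi(\langle\alpha\rangle)$ is exactly the constant in $\widehat{\psi_\alpha * f}=\gamma_\psi(\alpha)|\alpha|^{-1/2}\,\psi_{-1/\alpha}\hat f$, read distributionally. The factors $|\alpha|^{\pm1/2}$ are absorbed by the normalization of the measures on $L_\alpha$, because $L_\alpha$ is parametrized by $e_\alpha$ and $\omega(e_\infty,e_\alpha)=-1$. The result is precisely $\gamma_\psi(\langle\alpha\rangle)$.

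The main obstacle is bookkeeping of signs and orientation. One must check that the composite in the stated order yields $\gamma_\psi(\alpha)$ and not $\gamma_\psi(-\alpha)=\overline{\gamma_\psi(\alpha)}$, which is what the reversed triple would give. One must also check that the factor $\tfrac12$ in the Heisenberg cocycle matches Weil's convention $\psi(q(x))$ versus $\psi(\tfrac12 q(x))$. I would first test the computation with $F=\R$, $\psi(x)=e^{2\pi ix}$ and $\alpha=1$, where $\gamma=e^{\pi i/4}$ is known. Then I would fix the convention so that the formula agrees with Proposition~\ref{prop:infty-slope} as stated.
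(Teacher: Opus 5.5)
Your reduction is sound, and it organizes the proof differently from the paper. The map $f\mapsto f\circ g^{-1}$, $\mathcal H_L\to\mathcal H_{gL}$, comes from the automorphism $(v,t)\mapsto(gv,t)$ of $H(V)$, so there is no metaplectic ambiguity. It conjugates $T_{M,L}$ to $T_{gM,gL}$, because the normalization (positive multiple of the integral operator, unitary) is equivariant. Together with transitivity on transverse pairs and Proposition~\ref{prop:infty-slope}, this reduces you to $(L_\infty,L_\alpha,L_0)$.

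The paper never reduces. In Lemma~\ref{lem:app-closed-triangle} it writes the composite for an arbitrary triple as the integral of $f(x_1x_2x_3h)$ over $L_1\times L_2\times L_3$. It expands this with the Heisenberg law and disintegrates along the sum map, so the phase $\psi(\tfrac12\omega(x_1,x_2))$ on $K(L_1,L_2,L_3)$ appears directly. That explains why the Kashiwara form enters for every triple, whereas your route needs only one Gaussian integral. Unitarity forces the modulus to be $1$, so your $|\alpha|^{\pm1/2}$ bookkeeping is harmless.

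The gap is the step you postpone, which carries the whole content of the formula. You cannot ``fix the convention so that the formula agrees.'' The Heisenberg law with $\tfrac12\omega$ and the central character $\psi$ are already fixed, and they determine the answer.

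Put $\phi(y)=f((ye_\infty,0))$. Then the composite, evaluated at the identity of $H(V)$, is $c\,\phi(0)\int_F\psi(\tfrac12\alpha b^2)\,db$, with $c>0$ and the integral regularized. So the scalar is the Weil index of $x\mapsto\psi(\tfrac12\alpha x^2)$, with the sign appropriate to the order $(L_\infty,L_\alpha,L_0)$. Your displayed identity, with $|\alpha|^{-1/2}$ and $\psi_{-1/\alpha}$, is the one for $\psi_\alpha(x)=\psi(\tfrac12\alpha x^2)$. Hence if ``the Weil index of $q$'' means the index of $\psi\circ\tfrac12 q$, your argument proves the theorem.

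In the normalization \eqref{eq:local-gaussian} that the paper uses for $\gamma_\psi(a)$, however, this scalar is $\gamma_\psi(\alpha/2)=\gamma_\psi(2\alpha)$. The ratio $\gamma_\psi(2\alpha)/\gamma_\psi(\alpha)=\Hilb{2}{\alpha}\gamma_\psi(2)/\gamma_\psi(1)$ equals $-1$ for $F=\Q_p$, $\psi$ unramified, and $\alpha=p\equiv\pm3\pmod 8$. Your proposed test over $\R$ cannot detect this, since $2$ is a square there.

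So you must actually carry out the computation and state explicitly which normalization of $\gamma_\psi(q)$ you are proving. No choice made after the fact reconciles the result with \eqref{eq:local-gaussian}. The paper's appendix passes over the same point by appealing to the Rao/Lion--Vergne normalization (``twice this raw central phase''). Yet its own Proposition~\ref{prop:app-local-diagonal-scalar} shows that the same $\tfrac12$ produces $\gamma_\psi(2a)$.
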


\begin{proof}
  This is the rank-two case of the metaplectic cocycle formula; see Rao~\cite[Theorem~5.1]{Rao} or Lion--Vergne~\cite[Chapter~II, \S2]{LionVergne}. The triple composition depends on the Kashiwara quadratic space $K(L_1,L_2,L_3)$. For a one-dimensional form $\langle a\rangle$, the scalar is $\gamma_\psi(a)$.
\end{proof}

\begin{remark}
  Theorem~\ref{thm:maslov-weil} is the metaplectic input. The product formula is proved in the adelic Schr\"odinger model, and the arithmetic input is the local multiplicativity law together with the explicit Hilbert-symbol evaluations.
\end{remark}

For $a\in F^\times$ we write
\[
\gamma_\psi(a):=\gamma_\psi(\langle a\rangle)=\gamma_\psi(a x^2).
\]
Then Proposition~\ref{prop:infty-slope} and Theorem~\ref{thm:maslov-weil} identify the local phase.

\begin{corollary}\label{cor:basic-phase}
For every $a\in F^\times$, the triple phase of the ordered triple $(L_\infty,L_a,L_0)$ is the Weil index $\gamma_\psi(a)$:
\[
T_{L_0,L_a}T_{L_a,L_\infty}T_{L_\infty,L_0}=\gamma_\psi(a)\,\Id.
\]
Thus the one-dimensional Weil index is a Kashiwara--Maslov phase \cite{ThomasMaslov}.
\end{corollary}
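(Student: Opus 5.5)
The corollary should follow by specializing Theorem~\ref{thm:maslov-weil} to the ordered triple $(L_1,L_2,L_3)=(L_\infty,L_a,L_0)$ and reading off the Kashiwara form from Proposition~\ref{prop:infty-slope}.

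First I will check that the triple is pairwise transverse, which is the hypothesis of Theorem~\ref{thm:maslov-weil}. For $a\in F^\times$ the slopes $\infty$, $a$, $0$ are distinct, and distinct lines in a plane meet only in $0$. Concretely, $\omega(e_\infty,e_a)=-1$, $\omega(e_\infty,e_0)=-1$ and $\omega(e_a,e_0)=-a$, all nonzero.

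Next, Theorem~\ref{thm:maslov-weil} with this ordering gives
\[
T_{L_0,L_a}T_{L_a,L_\infty}T_{L_\infty,L_0}=\gamma_\psi\big(q_{L_\infty,L_a,L_0}\big)\,\Id .
\]
Here the indices match the theorem's pattern $T_{L_3,L_2}T_{L_2,L_1}T_{L_1,L_3}$ with $L_1=L_\infty$, $L_2=L_a$, $L_3=L_0$. Proposition~\ref{prop:infty-slope} with $(a,b)=(a,0)$ gives the spanning vector $\xi_{\infty,a,0}$ and the value $q(\xi_{\infty,a,0})=a$. Hence $q_{L_\infty,L_a,L_0}\cong\langle a\rangle$.

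The remaining point is that the Weil index depends only on the isometry class of the quadratic form. This holds because $\gamma_\psi(q)$ is defined via the Fourier transform of $\psi\circ q$, and that definition is invariant under linear change of variables. So $\gamma_\psi(q_{L_\infty,L_a,L_0})=\gamma_\psi(\langle a\rangle)=\gamma_\psi(a)$ by the notation just introduced, which proves the corollary.

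The only real obstacle is bookkeeping. The ordering convention in the triple composition must agree with the ordering used to define $q_{L_1,L_2,L_3}$, since reversing the order replaces $\langle a\rangle$ by $\langle -a\rangle$, as the remark after Proposition~\ref{prop:infty-slope} notes, and $\gamma_\psi(-a)=\overline{\gamma_\psi(a)}$. I would therefore verify explicitly that the subscripts in the displayed product are exactly the theorem's pattern with $L_1=L_\infty$, $L_2=L_a$, $L_3=L_0$, and not a cyclic or reversed variant.
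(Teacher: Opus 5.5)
Your proposal is correct and matches the paper's proof. The paper likewise reads off $q_{L_\infty,L_a,L_0}\cong\langle a\rangle$ from Proposition~\ref{prop:infty-slope}, notes that the reversed order gives $\langle -a\rangle$, and applies Theorem~\ref{thm:maslov-weil} with $(L_1,L_2,L_3)=(L_\infty,L_a,L_0)$; your transversality check and your remark that $\gamma_\psi$ depends only on the isometry class are harmless additions the paper leaves implicit (and cyclic variants would in fact be harmless too, since both the Kashiwara form and the triple scalar are invariant under cyclic permutation).
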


\begin{proof}
By Proposition \ref{prop:infty-slope}, the Kashiwara form of $(L_\infty,L_a,L_0)$ is $\langle a\rangle$.  The order matters: with the present convention $(L_\infty,L_0,L_a)$ gives $\langle -a\rangle$.  The result is therefore immediate from Theorem \ref{thm:maslov-weil}.
\end{proof}

\section{Local Weil indices: conventions and multiplicativity}\label{sec:local-weil}

From now on $v$ denotes a place of $\Q$, and $F=\Q_v$.

\subsection{Standard additive characters and self-dual measures}

We use the standard local additive characters:
\[
\psi_\infty(x)=e^{2\pi i x}
\qquad (x\in\R),
\]
and for a prime $p$,
\[
\psi_p(x)=e^{-2\pi i\{x\}_p}
\qquad (x\in\Q_p),
\]
where $\{x\}_p\in\Q/\Z$ denotes the $p$-adic fractional part.  The global character on the adeles is
\[
\psi(x)=\prod_v\psi_v(x_v),\qquad x=(x_v)\in\A,
\]
and it is trivial on the diagonal copy of $\Q$.

For each $v$ we choose the Haar measure $dx_v$ on $\Q_v$ self-dual with respect to $\psi_v$.  Explicitly, $dx_\infty$ is Lebesgue measure on $\R$, and for finite $p$ the self-dual measure is normalized by
\[
\operatorname{vol}(\Z_p,dx_p)=1.
\]
The adelic measure is the restricted product
\[
dx=\prod_v dx_v,
\]
which is self-dual with respect to the global character $\psi$.

\subsection{Definition of the one-dimensional Weil index}

Let $a\in\Q_v^\times$.  The one-dimensional Weil index $\gamma_v(a)$ is the unique scalar for which the Fourier transform of the quadratic exponential has the form
\begin{equation}\label{eq:local-gaussian}
\int_{\Q_v}\widehat{\phi}(x)\,\psi_v(ax^2)\,dx
=
\gamma_v(a)|2a|_v^{-1/2}
\int_{\Q_v}\phi(x)\,\psi_v\!\left(-\frac{x^2}{4a}\right)dx
\end{equation}
for every Schwartz--Bruhat function $\phi\in\Sch(\Q_v)$.  Here
\[
\widehat{\phi}(y):=\int_{\Q_v}\phi(x)\psi_v(xy)\,dx
\]
is the self-dual Fourier transform.

The existence and uniqueness of $\gamma_v(a)$ are standard.  The normalization above is the one compatible with the metaplectic cocycle and with the standard adelic Fourier transform.  In particular, $\gamma_v(a)$ depends only on the square-class of $a$ and satisfies $|\gamma_v(a)|=1$.

\begin{lemma}\label{lem:square-invariance}
For $a,c\in\Q_v^\times$,
\[
\gamma_v(ac^2)=\gamma_v(a).
\]
Hence $\gamma_v$ descends to a function on $\Q_v^\times/\Q_v^{\times 2}$.
\end{lemma}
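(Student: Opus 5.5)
We prove the lemma directly from the defining identity \eqref{eq:local-gaussian}, by a change of variables in both integrals. Fix $a,c\in\Q_v^\times$ and an arbitrary $\phi\in\Sch(\Q_v)$. Put $\phi_c(x):=\phi(cx)$; this is again Schwartz--Bruhat. Substituting $y=cx$, with $d(cx)=|c|_v\,dx$ for the self-dual Haar measure, gives
\[
\widehat{\phi_c}(y)=|c|_v^{-1}\widehat{\phi}(y/c).
\]
We will apply \eqref{eq:local-gaussian} with parameter $a$ to the function $\phi_c$, and then rewrite both sides in terms of $\phi$ and the parameter $ac^2$.

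On the left side, the substitution $y=cx$ gives
\[
\int\widehat{\phi_c}(y)\psi_v(ay^2)\,dy
=|c|_v^{-1}\int\widehat\phi(y/c)\psi_v(ay^2)\,dy
=\int\widehat\phi(x)\psi_v(ac^2x^2)\,dx .
\]
On the right side, the substitution $u=cx$ gives
\[
\int\phi(cx)\psi_v\!\left(-\frac{x^2}{4a}\right)dx
=|c|_v^{-1}\int\phi(u)\psi_v\!\left(-\frac{u^2}{4ac^2}\right)du .
\]
The absolute-value prefactor then becomes
\[
|2a|_v^{-1/2}|c|_v^{-1}=|2ac^2|_v^{-1/2}.
\]
Combining these, for every $\phi$ we obtain
\[
\int\widehat\phi(x)\psi_v(ac^2x^2)\,dx=\gamma_v(a)\,|2ac^2|_v^{-1/2}\int\phi(u)\psi_v\!\left(-\frac{u^2}{4ac^2}\right)du .
\]
This is exactly \eqref{eq:local-gaussian} for the parameter $ac^2$, with $\gamma_v(a)$ in place of the constant.

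By the uniqueness of the Weil index (stated just before the lemma), $\gamma_v(ac^2)=\gamma_v(a)$. Uniqueness needs the right-hand integral to be nonzero for some $\phi$, and it is: take $\phi$ to be a bump function (or the indicator of a small ball) on which $\psi_v(-u^2/4ac^2)$ is nearly constant. The descent to $\Q_v^\times/\Q_v^{\times2}$ is then immediate.

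The only delicate point is the bookkeeping of the $|c|_v$ factors: one from the Fourier dilation and one from each change of variables. They must combine into $|c|_v^{-1}$, which then matches the change from $|2a|_v^{-1/2}$ to $|2ac^2|_v^{-1/2}$. Everything else is formal.
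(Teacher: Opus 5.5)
Your proof is correct and is the same argument as the paper's. You dilate by $c$ in the defining Gaussian identity, check that the measure factors combine as $|2a|_v^{-1/2}|c|_v^{-1}=|2ac^2|_v^{-1/2}$, and conclude by uniqueness. Your version is more careful than the paper's one-line sketch: you apply the identity to the dilated test function $\phi_c$, account for $\widehat{\phi_c}(y)=|c|_v^{-1}\widehat{\phi}(y/c)$, and check the nonvanishing of the right-hand integral that uniqueness requires.
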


\begin{proof}
Replace $x$ by $cx$ in \eqref{eq:local-gaussian}.  Since the self-dual measure transforms by $d(cx)=|c|_v\,dx$ and the prefactor on the right changes by $|c^2|_v^{-1/2}=|c|_v^{-1}$, the resulting identity is the defining identity for $\gamma_v(a)$.  Uniqueness therefore implies $\gamma_v(ac^2)=\gamma_v(a)$.
\end{proof}

\subsection{The local multiplicativity law}

We next prove the identity relating the defect of multiplicativity of $\gamma_v$ to the local Hilbert symbol.  Recall that for $a,b\in\Q_v^\times$, the Hilbert symbol $\Hilb{a}{b}_v\in\{\pm1\}$ is characterized by
\[
\Hilb{a}{b}_v=1
\iff
z^2=ax^2+by^2\text{ has a nonzero solution in }\Q_v^3.
\]
Equivalently, $\Hilb{a}{b}_v=1$ iff $b$ is a norm from $\Q_v(\sqrt a)$ \cite[Ch.~III, \S1]{SerreLocal}.

\begin{theorem}\label{thm:local-mult}
For every place $v$ of $\Q$ and all $a,b\in\Q_v^\times$ one has
\begin{equation}\label{eq:local-multiplicativity}
\gamma_v(a)\gamma_v(b)=\gamma_v(1)\gamma_v(ab)\Hilb{a}{b}_v.
\end{equation}
\end{theorem}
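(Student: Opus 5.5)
The plan is to reduce \eqref{eq:local-multiplicativity} to a statement about Weil indices of binary quadratic forms. Then the Hilbert symbol enters through the classical description of isometry classes of binary forms.

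First, recall the standard properties of the Weil index $\gamma_v(Q)$ of a nondegenerate quadratic form $Q$ over $\Q_v$, defined by the $n$-dimensional analogue of \eqref{eq:local-gaussian}.
\begin{enumerate}[label=(\roman*)]
\item It depends only on the isometry class of $Q$.
\item It is multiplicative on orthogonal sums: $\gamma_v(Q_1\oplus Q_2)=\gamma_v(Q_1)\gamma_v(Q_2)$.
\item It satisfies $\gamma_v(Q)\gamma_v(-Q)=1$.
\end{enumerate}
Property (ii) is immediate by applying \eqref{eq:local-gaussian} to product test functions $\phi_1\otimes\phi_2$ and invoking uniqueness. Property (i) follows from the change-of-variables argument of Lemma~\ref{lem:square-invariance}, carried out with a matrix in place of a scalar. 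By (ii), $\gamma_v(a)\gamma_v(b)=\gamma_v(\langle a,b\rangle)$ and $\gamma_v(1)\gamma_v(ab)=\gamma_v(\langle 1,ab\rangle)$. The two binary forms $\langle a,b\rangle$ and $\langle 1,ab\rangle$ have the same discriminant.

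By the classification of quadratic forms over local fields \cite[Ch.~IV]{SerreLocal}, binary forms are determined by discriminant and Hasse invariant. The forms $\langle a,b\rangle$ and $\langle 1,ab\rangle$ are isometric exactly when $\Hilb{a}{b}_v=1$, since the Hasse invariant of $\langle a,b\rangle$ is $\Hilb{a}{b}_v$ and that of $\langle 1,ab\rangle$ is $(1,ab)_v=1$. So the case $\Hilb{a}{b}_v=1$ follows from (i).

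For $\Hilb{a}{b}_v=-1$ we need $\gamma_v(\langle a,b\rangle)=-\gamma_v(\langle 1,ab\rangle)$. The cleanest route is to show that the ratio $\gamma_v(\langle a,b\rangle)/\gamma_v(\langle 1,ab\rangle)$ equals $\gamma_v(\langle a,b,-1,-ab\rangle)$, which is a consequence of (ii) and (iii). The form $\langle a,b,-1,-ab\rangle$ is the negative of the norm form of the quaternion algebra $(a,b)_v$, written with the sign convention $\langle 1,-a,-b,ab\rangle$ up to the appropriate sign changes. This quaternion form is split (hyperbolic) when $\Hilb{a}{b}_v=1$ and anisotropic otherwise. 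For a hyperbolic plane, $\gamma_v=1$ by direct computation: $\psi_v(xy)$ has Fourier transform $\delta$, up to self-dual normalization. It therefore remains to show that the Weil index of the anisotropic four-dimensional form is $-1$.

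This last step is the main obstacle. I would first try Weil's theorem that $\gamma_v$ of a form of even dimension and trivial discriminant equals its Hasse invariant, if citing it is acceptable. Failing that, I would verify it place by place. At $v=\infty$ the anisotropic form is $\pm\langle1,1,1,1\rangle$, and the Fresnel integrals give $\gamma_\infty(\pm1)^4=e^{\pm\pi i}=-1$. At odd $p$ the form can be written as $\langle 1,-\epsilon,-p,\epsilon p\rangle$ with $\epsilon$ a nonsquare unit. Its Weil index is computed from normalized Gauss sums: $\gamma_p(u)=1$ for units, and $\gamma_p(pu)$ is the normalized Gauss sum $p^{-1/2}\leg{u}{p}\mathcal G(1,p)$ up to convention. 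The Legendre factor $\leg{\epsilon}{p}=-1$ then produces the sign. At $p=2$ I would compute $\gamma_2$ directly for representatives of $\Q_2^\times/\Q_2^{\times2}$ using Gaussian integrals over $\Z_2$, and check the anisotropic case $\langle1,1,1,1\rangle$, where the value $\gamma_2(1)^4=-1$ comes from eighth roots of unity.
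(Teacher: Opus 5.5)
Your argument is correct, but it is organized differently from the paper's. The paper also starts from orthogonal-sum multiplicativity, $\gamma_v(a)\gamma_v(b)=\gamma_v(\langle a,b\rangle)$. It then invokes the general Hasse-invariant formula $\gamma(q)=\gamma(1)^{n-1}\gamma(\det q)h(q)$ (Theorem~\ref{thm:app-hasse-weil}, cited from Weil and Rao) in the case $n=2$. That binary case is essentially the statement itself, so the paper's proof amounts to a citation.

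You instead split on the sign of $\Hilb{a}{b}_v$. The case $+1$ follows from isometry invariance alone. For the case $-1$, you use $\gamma_v(-Q)=\gamma_v(Q)^{-1}$ to reduce to a single fact: the negated quaternion norm form $\langle a,b,-1,-ab\rangle$ has Weil index $-1$ when it is anisotropic. You then check this by hand with Fresnel integrals, Gauss sums, and $\gamma_2(1)^4=-1$.

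What your route buys:
\begin{itemize}
\item a self-contained proof of exactly the input the paper treats as a black box;
\item the conceptual identification of the defect $\gamma_v(a)\gamma_v(b)/(\gamma_v(1)\gamma_v(ab))$ with the Weil index of the norm form of the quaternion algebra $(a,b)_v$.
\end{itemize}
The price is a place-by-place case analysis. You should also state explicitly the uniqueness over $\Q_p$ of the anisotropic quaternary form of trivial discriminant (Serre, Ch.~IV). That uniqueness is what lets you evaluate on one representative at each prime. It also makes the extra square-class computations at $2$ unnecessary, since only $\gamma_2(1)$ is needed. Over $\R$ there are two such forms, $\pm\langle1,1,1,1\rangle$, and you correctly treat both.

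One caution about your first option of citing Weil's theorem. Take ``Hasse invariant'' to mean $h(q)=\prod_{i<j}\Hilb{a_i}{a_j}_v$, as in the paper. Then $h(\langle a,b,-1,-ab\rangle)=\Hilb{a}{b}_v\Hilb{-1}{-1}_v$, which gives the wrong sign at $v=2$ and $v=\infty$. For example, with $a=b=-1$ at $\infty$ one gets $h=1$, but $\gamma_\infty(-1)^4=-1$.

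The invariant in Weil's theorem is the Witt (Clifford) invariant, which here is the class of $(a,b)_v$. This is consistent with the paper's formula: it gives $\gamma_v(q)=\gamma_v(1)^4h(q)$, and $\gamma_v(1)^4=\Hilb{-1}{-1}_v$. Your place-by-place verification avoids this issue entirely.
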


\begin{proof}
This is proved in Appendix~\ref{app:rank-two-local-bridge}, Theorem~\ref{thm:app-local-bridge}.  The proof there reduces the binary identity to the explicit Hasse-invariant formula for Weil indices of diagonal quadratic forms and records the normalization used here.
\end{proof}

\begin{remark}
  We use \eqref{eq:local-multiplicativity} as the local input.  It is the standard binary specialization of the Hasse-invariant formula for Weil indices.
\end{remark}

\section{Global splitting and the product formula for the local defects}\label{sec:global-product}

We now prove the global identity needed for reciprocity, by showing that the local diagonal defects from the rank-one Weil representation cancel adelically.

\subsection{The local Bruhat word and its projective defect}

Let $F=\Q_v$.  In the local Schr\"odinger model on $\Sch(F)$ define
\[
(N_v(t)\phi)(x)=\psi_v\!\left(\frac{t x^2}{2}\right)\phi(x),
\qquad
\mathcal F_v\phi(y)=\int_F \phi(x)\psi_v(xy)\,dx.
\]
Also set
\[
\bar N_v(u):=\mathcal F_v N_v(-u)\mathcal F_v^{-1}.
\]
At the matrix level these operators implement
\[
n(t)=\begin{pmatrix}1&t\\0&1\end{pmatrix},
\qquad
m(a)=\begin{pmatrix}a&0\\0&a^{-1}\end{pmatrix},
\qquad
\bar n(u)=\begin{pmatrix}1&0\\u&1\end{pmatrix},
\qquad
w=\begin{pmatrix}0&1\\-1&0\end{pmatrix}.
\]

For $a\in F^\times$ define the local Bruhat-word operator
\[
B_v(a):=N_v(a)\,\bar N_v(-a^{-1})\,N_v(a)\,\mathcal F_v^{-1}.
\]
The product $B_v(a)B_v(b)$ and the single operator $B_v(ab)$ implement the same automorphism of the local Heisenberg representation.  By Schur's lemma they differ by a scalar.  The scalar in this comparison is the local projective defect.

\begin{proposition}\label{prop:local-diagonal-scalar}
For every place $v$ and every $a\in\Q_v^\times$, one has
\[
B_v(a)=\gamma_v(2a)\,M_v^{\mathrm{std}}(a),
\qquad
(M_v^{\mathrm{std}}(a)\phi)(x)=|a|_v^{1/2}\phi(ax).
\]
Consequently, if
\[
C_v(a):=B_v(1)^{-1}B_v(a),
\]
then
\[
C_v(a)=\frac{\gamma_v(2a)}{\gamma_v(2)}M_v^{\mathrm{std}}(a)
\]
and
\[
C_v(a)C_v(b)=\mu_v(a,b)\,C_v(ab),
\]
where
\[
\mu_v(a,b)=
\frac{\gamma_v(a)\gamma_v(b)}{\gamma_v(1)\gamma_v(ab)}
=\Hilb{a}{b}_v.
\]
\end{proposition}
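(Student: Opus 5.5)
The plan is to compute $B_v(a)$ directly as an integral operator and identify the scalar through the defining Gaussian identity \eqref{eq:local-gaussian}. Afterwards the cocycle statements follow by pure bookkeeping with Lemma~\ref{lem:square-invariance} and Theorem~\ref{thm:local-mult}.

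\textbf{Step 1: the conjugated chirp.} Unwinding $\bar N_v(-a^{-1})=\mathcal F_v N_v(a^{-1})\mathcal F_v^{-1}$ gives
\[
B_v(a)=N_v(a)\,\mathcal F_v N_v(a^{-1})\mathcal F_v^{-1}\,N_v(a)\,\mathcal F_v^{-1}.
\]
The key sub-computation is the operator $\mathcal F_v N_v(c)\mathcal F_v^{-1}$. It is convolution against the (distributional) Fourier transform of the chirp $\psi_v(cx^2/2)$. Applying \eqref{eq:local-gaussian} with $a$ replaced by $c/2$, and pairing against a test function $\phi$, should identify this kernel as
\[
\gamma_v(c/2)\,|c|_v^{-1/2}\,\psi_v\!\left(-\frac{(x-y)^2}{2c}\right),
\]
up to the sign conventions for $\mathcal F_v^{-1}$, which need to be tracked. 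With $c=a^{-1}$ this gives the factor $\gamma_v(1/(2a))=\gamma_v(2a)$, by Lemma~\ref{lem:square-invariance}.

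\textbf{Step 2: telescoping to a dilation.} Substituting this kernel, one is left with an explicit composition of chirps, one convolution against a chirp, and one Fourier transform. Expanding $\psi_v(-a(x-y)^2/2)$ together with the outer chirps $\psi_v(ax^2/2)$ and $\psi_v(ay^2/2)$, the quadratic terms cancel (completing the square). What remains is $\psi_v(axy)$ paired against the final $\mathcal F_v^{-1}$. Fourier inversion then collapses this to $\phi(\pm ax)$ with a factor $|a|_v^{1/2}$, i.e.\ to $M_v^{\mathrm{std}}(a)$. This works because the three chirps exactly realize the matrix identity $n(a)\bar n(-a^{-1})n(a)w^{-1}=m(a)$.

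I expect the main obstacle to lie in these first two steps. One has to keep the signs in $\mathcal F_v^{-1}$, and the possible $\phi(-ax)$ versus $\phi(ax)$, consistent. One also has to justify the manipulations with non-integrable chirps. I would handle the latter by working weakly against Schwartz--Bruhat functions throughout, so that \eqref{eq:local-gaussian} is used exactly in its stated form.

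\textbf{Step 3: the cocycle.} Since $M_v^{\mathrm{std}}$ is a genuine representation and $M_v^{\mathrm{std}}(1)=\Id$, we get $C_v(a)=\gamma_v(2)^{-1}\gamma_v(2a)M_v^{\mathrm{std}}(a)$. Hence
\[
\mu_v(a,b)=\frac{\gamma_v(2a)\gamma_v(2b)}{\gamma_v(2)\gamma_v(2ab)}.
\]
Apply Theorem~\ref{thm:local-mult} to the numerator and the denominator:
\[
\gamma_v(2a)\gamma_v(2b)=\gamma_v(1)\gamma_v(4ab)\Hilb{2a}{2b}_v,
\qquad
\gamma_v(2)\gamma_v(2ab)=\gamma_v(1)\gamma_v(4ab)\Hilb{2}{2ab}_v.
\]
Note that $\gamma_v(4ab)=\gamma_v(ab)$ by square invariance. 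The ratio is then $\Hilb{2a}{2b}_v\Hilb{2}{2ab}_v$. Bimultiplicativity and symmetry of the Hilbert symbol reduce this to $\Hilb{a}{b}_v$. Finally, the identity $\Hilb{a}{b}_v=\gamma_v(a)\gamma_v(b)/(\gamma_v(1)\gamma_v(ab))$ is Theorem~\ref{thm:local-mult} itself, which gives both stated expressions for $\mu_v$.
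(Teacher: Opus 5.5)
Your proposal is correct and follows essentially the paper's route: an explicit Gaussian evaluation of $B_v(a)$ producing $\gamma_v(1/(2a))=\gamma_v(2a)$ and the factor $|a|_v^{1/2}$, followed by the same bookkeeping with Theorem~\ref{thm:local-mult} applied to $(2a,2b)$ and $(2,2ab)$. The only difference is the order of integration: you evaluate the Gaussian first as the kernel of $\bar N_v(-a^{-1})$, whereas the paper substitutes $r=y-as$ and does the Fourier inversion first. Your ordering has the mild advantage that \eqref{eq:local-gaussian} is used literally as the distributional Fourier transform of the chirp, and the signs you flagged do resolve to $\phi(ax)$.
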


\begin{proof}
The scalar formula
\[
B_v(a)=\gamma_v(2a)M_v^{\mathrm{std}}(a)
\]
is proved by the direct local Gaussian calculation in Appendix~\ref{app:rank-two-local-bridge}, Proposition~\ref{prop:app-local-diagonal-scalar}.  Since the concrete scaling operators multiply strictly,
\[
M_v^{\mathrm{std}}(a)M_v^{\mathrm{std}}(b)=M_v^{\mathrm{std}}(ab),
\]
the normalized operators satisfy
\[
C_v(a)C_v(b)
=
\frac{\gamma_v(2a)\gamma_v(2b)}{\gamma_v(2)\gamma_v(2ab)}\,C_v(ab).
\]
By the local multiplicativity law of Theorem~\ref{thm:local-mult}, applied first to $(2a,2b)$ and then to $(2,2ab)$, this scalar is
\[
\frac{\Hilb{2a}{2b}_v}{\Hilb{2}{2ab}_v}.
\]
Using bilinearity and symmetry of the Hilbert symbol,
\[
\frac{\Hilb{2a}{2b}_v}{\Hilb{2}{2ab}_v}
=
\frac{\Hilb{2}{2}_v\Hilb{2}{b}_v\Hilb{a}{2}_v\Hilb{a}{b}_v}
{\Hilb{2}{2}_v\Hilb{2}{a}_v\Hilb{2}{b}_v}
=
\Hilb{a}{b}_v.
\]
The equality
\[
\Hilb{a}{b}_v=\frac{\gamma_v(a)\gamma_v(b)}{\gamma_v(1)\gamma_v(ab)}
\]
is Theorem~\ref{thm:local-mult}.  This proves the stated projective multiplication law for the normalized Bruhat-word operators.
\end{proof}

\subsection{Theta invariance of the adelic generators}

On the adelic Schwartz--Bruhat space $\Sch(\A)$ define the theta distribution
\[
\Theta(\Phi):=\sum_{r\in\Q}\Phi(r).
\]
We also define the global operators
\[
(N(t)\Phi)(x)=\psi\!\left(\frac{t x^2}{2}\right)\Phi(x),
\qquad
(M(a)\Phi)(x)=|a|_\A^{1/2}\Phi(ax),
\qquad
\mathcal F\Phi(y)=\int_\A \Phi(x)\psi(xy)\,dx,
\]
for $t\in\Q$ and $a\in\Q^\times$, where $dx=\prod_v dx_v$ is the self-dual adelic measure.

\begin{lemma}\label{lem:theta-invariance}
For every $t\in\Q$, every $a\in\Q^\times$, and every $\Phi\in\Sch(\A)$ one has
\[
\Theta(N(t)\Phi)=\Theta(\Phi),
\qquad
\Theta(M(a)\Phi)=\Theta(\Phi),
\qquad
\Theta(\mathcal F\Phi)=\Theta(\Phi).
\]
Consequently, if $\bar N(u):=\mathcal F N(-u)\mathcal F^{-1}$, then
\[
\Theta(\bar N(u)\Phi)=\Theta(\Phi)
\qquad (u\in\Q).
\]
\end{lemma}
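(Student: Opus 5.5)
We prove the three invariances separately, each by a one-line global input, and then deduce the statement for $\bar N(u)$ formally. Throughout, $\Theta(\Phi)=\sum_{r\in\Q}\Phi(r)$ converges absolutely for $\Phi\in\Sch(\A)$: a factorizable $\Phi$ restricted to $\Q$ is supported in a lattice $D^{-1}\Z$ and decays rapidly at $\infty$. Linearity and density then let us work with factorizable $\Phi=\prod_v\Phi_v$ where convenient. Each operator $N(t)$, $M(a)$, $\mathcal F$ preserves $\Sch(\A)$, so every expression below is defined.

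For $N(t)$ with $t\in\Q$ we have $\Theta(N(t)\Phi)=\sum_{r\in\Q}\psi(tr^2/2)\Phi(r)$. Since $tr^2/2\in\Q$ and the global character $\psi$ is trivial on the diagonal copy of $\Q$, every phase equals $1$. For $M(a)$ with $a\in\Q^\times$, the product formula gives $|a|_\A=\prod_v|a|_v=1$. Hence $\Theta(M(a)\Phi)=\sum_{r\in\Q}\Phi(ar)$, and this equals $\Theta(\Phi)$ because $r\mapsto ar$ is a bijection of $\Q$.

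The Fourier case is adelic Poisson summation for the discrete cocompact subgroup $\Q\subset\A$:
\[
\sum_{r\in\Q}\widehat\Phi(r)=\operatorname{vol}(\A/\Q)\sum_{r\in\Q^\perp}\Phi(r).
\]
Two facts are needed. The annihilator of $\Q$ under the pairing $\psi(xy)$ is $\Q$ itself. With the self-dual measure $dx=\prod_v dx_v$, the quotient satisfies $\operatorname{vol}(\A/\Q)=1$, since $[0,1)\times\prod_p\Z_p$ is a fundamental domain of measure $1$. I expect this to be the only step needing care. Triviality of $\psi$ on $\Q$ gives $\Q\subseteq\Q^\perp$. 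For the reverse inclusion, $\Q^\perp/\Q$ is a closed subgroup of the compact group $\A/\Q$ that is also discrete (being the dual of the discrete quotient $\A/\Q$ modulo a subgroup), so it is finite. A direct check finishes: if $y\in\A$ has $\psi(ry)=1$ for all $r\in\Q$, shift $y$ by an element of $\Q$ so that $y_p\in\Z_p$ for all $p$ (strong approximation for $\Q$). Then $\psi(y/n)=1$ for all $n$ forces $y_\infty\in\Z$ and then $y_\infty=0$ (up to the shift), so $y\in\Q$. Rather than reprove all of this, I would cite Tate's thesis for both facts. This gives $\Theta(\mathcal F\Phi)=\Theta(\Phi)$.

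Finally, $\Theta(\mathcal F^{-1}\Phi)=\Theta(\Phi)$ follows from the Fourier case applied to $\mathcal F^{-1}\Phi$. One can also note directly that $\mathcal F^{-1}\Phi(y)=\mathcal F\Phi(-y)$ and that $\Q=-\Q$. Then, for $u\in\Q$,
\[
\Theta(\bar N(u)\Phi)=\Theta\big(\mathcal F N(-u)\mathcal F^{-1}\Phi\big)=\Theta\big(N(-u)\mathcal F^{-1}\Phi\big)=\Theta(\mathcal F^{-1}\Phi)=\Theta(\Phi),
\]
using in turn the Fourier invariance, the $N(t)$ invariance with $t=-u\in\Q$, and the inverse Fourier invariance.
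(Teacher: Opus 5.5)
Your proof is correct and follows the paper's argument exactly. You use triviality of $\psi$ on $\Q$ for $N(t)$, the product formula $|a|_\A=1$ for $M(a)$, adelic Poisson summation for $\mathcal F$, and the formal composition for $\bar N(u)$; you also justify $\Q^\perp=\Q$ and $\operatorname{vol}(\A/\Q)=1$, which the paper leaves implicit. One harmless slip: the displayed Poisson formula should read $\sum_{r\in\Q}\widehat\Phi(r)=\operatorname{vol}(\A/\Q)^{-1}\sum_{r\in\Q^\perp}\Phi(r)$, which changes nothing here because the covolume is $1$.
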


\begin{proof}
For $t\in\Q$ and $r\in\Q$ one has $\psi(tr^2/2)=1$, since $\psi$ is trivial on the diagonal copy of $\Q$.  Thus
\[
\Theta(N(t)\Phi)=\sum_{r\in\Q}\psi\!\left(\frac{tr^2}{2}\right)\Phi(r)=\Theta(\Phi).
\]
For $a\in\Q^\times$, the adelic product formula gives $|a|_\A=1$, so
\[
(M(a)\Phi)(x)=\Phi(ax),
\]
and therefore
\[
\Theta(M(a)\Phi)=\sum_{r\in\Q}\Phi(ar)=\Theta(\Phi).
\]
Poisson summation for the lattice $\Q\subset\A$ in the self-dual normalization gives $\Theta(\mathcal F\Phi)=\Theta(\Phi)$.  The statement for $\bar N(u)$ follows from the definition and the invariance of $\Theta$ under $\mathcal F$ and $N(-u)$.
\end{proof}

\subsection{The global Bruhat word}

Define, for $a\in\Q^\times$,
\[
B(a):=N(a)\,\bar N(-a^{-1})\,N(a)\,\mathcal F^{-1}.
\]
We also set
\[
C(a):=B(1)^{-1}B(a).
\]
The local counterpart of $C(a)$ is the normalized operator $C_v(a)=B_v(1)^{-1}B_v(a)$ from Proposition~\ref{prop:local-diagonal-scalar}.

\begin{lemma}\label{lem:global-diagonal-identity}
For every $a\in\Q^\times$ one has
\[
B(a)=M(a)
\]
as operators on $\Sch(\A)$.
\end{lemma}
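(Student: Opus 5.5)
The plan is to avoid any direct global Gaussian computation. First show that $B(a)$ is a scalar multiple of $M(a)$, using the local result Proposition~\ref{prop:local-diagonal-scalar}. Then pin down the scalar by testing against the theta distribution with Lemma~\ref{lem:theta-invariance}.

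\emph{Step 1: factorization.} For $a\in\Q^\times$ and a factorizable $\Phi=\otimes_v\phi_v$, with $\phi_p=\mathbf 1_{\Z_p}$ for almost all $p$, the global operators $N(a)$, $\mathcal F$ and $\bar N(-a^{-1})$ act factor by factor. This holds because $\psi=\prod_v\psi_v$ and $dx=\prod_v dx_v$ is the product of the self-dual local measures. Hence
\[
B(a)\Phi=\otimes_v B_v(a)\phi_v .
\]
By Proposition~\ref{prop:local-diagonal-scalar}, $B_v(a)\phi_v=\gamma_v(2a)\,M_v^{\mathrm{std}}(a)\phi_v$.

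For all but finitely many $p$ we have $p\nmid 2$ and $a\in\Z_p^\times$. At such $p$, $\psi_p$ is unramified and $2a$ is a unit, so $\gamma_p(2a)=1$. I will check this from \eqref{eq:local-gaussian} with $\phi=\mathbf 1_{\Z_p}$: both sides equal $1$, since $\widehat{\mathbf 1_{\Z_p}}=\mathbf 1_{\Z_p}$ and $|2a|_p=1$. Also $M_p^{\mathrm{std}}(a)\mathbf 1_{\Z_p}=\mathbf 1_{\Z_p}$. So the product
\[
c(a):=\prod_v\gamma_v(2a)
\]
is a finite product, the tensor product is well defined, and
\[
B(a)\Phi=c(a)\,\textstyle\bigotimes_v M_v^{\mathrm{std}}(a)\phi_v=c(a)\,M(a)\Phi .
\]
Here $|a|_\A^{1/2}=\prod_v|a|_v^{1/2}$. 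Since factorizable functions span $\Sch(\A)$ and both sides are linear, $B(a)=c(a)M(a)$ on all of $\Sch(\A)$.

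\emph{Step 2: the scalar is $1$.} Lemma~\ref{lem:theta-invariance} shows that $\Theta$ is invariant under $N(a)$, $\bar N(-a^{-1})$, $\mathcal F$, and hence under $\mathcal F^{-1}$. Therefore $\Theta(B(a)\Phi)=\Theta(\Phi)$. The same lemma gives $\Theta(M(a)\Phi)=\Theta(\Phi)$. Combining these with Step 1 yields
\[
c(a)\,\Theta(\Phi)=\Theta(\Phi)\qquad\text{for all }\Phi\in\Sch(\A).
\]
Now take $\Phi=\phi_\infty\otimes\bigotimes_p\mathbf 1_{\Z_p}$ with $\phi_\infty\ge0$, $\phi_\infty(0)>0$, and support in $(-\tfrac12,\tfrac12)$. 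Then $\Theta(\Phi)=\sum_{n\in\Z}\phi_\infty(n)=\phi_\infty(0)\neq0$, which forces $c(a)=1$ and hence $B(a)=M(a)$.

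\emph{Main obstacle.} The delicate point is Step 1. I must verify that the global $\mathcal F^{-1}$ and $\bar N$ genuinely decompose as restricted tensor products of the local operators with the self-dual normalizations, so that no stray constant appears. I also need the almost-everywhere vanishing of the defect, $\gamma_p(2a)=1$ for $p\nmid 2a$. The unramified evaluation with $\mathbf 1_{\Z_p}$ is what I would check first.

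A by-product of the argument is the product formula $\prod_v\gamma_v(2a)=1$. This is consistent with the paper's aim, since the argument derives it from Poisson summation rather than assuming it.
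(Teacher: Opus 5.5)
Your argument is correct, but it reaches the proportionality $B(a)=c(a)M(a)$ by a different route from the paper. Step 2, fixing $c(a)=1$ via theta invariance and a test function with $\Theta(\Phi)\neq 0$, is the same as in the paper; your explicit choice giving $\Theta(\Phi)=\phi_\infty(0)$ works.

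\emph{The paper's route.} It gets proportionality abstractly. The identity $m(a)=n(a)\bar n(-a^{-1})n(a)w^{-1}$ in $\mathrm{SL}_2(\Q)$ means $B(a)$ and $M(a)$ implement the same automorphism of the adelic Heisenberg representation. The adelic Stone--von Neumann theorem and Schur's lemma then give a scalar.

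\emph{Your route.} You factor the global word over places and quote the explicit local evaluation $B_v(a)=\gamma_v(2a)M_v^{\mathrm{std}}(a)$ from Proposition~\ref{prop:local-diagonal-scalar}. That result is proved independently by the Gaussian computation in the appendix, so there is no circularity. The factorization you flag as the main obstacle is what the paper checks in Lemma~\ref{lem:restricted-tensor-comparison}. It works because, for odd $p$ with $a\in\Z_p^\times$, each of $N_p(a)$, $N_p(a^{-1})$ and $\mathcal F_p^{\pm1}$ fixes $\mathbf 1_{\Z_p}$. So every intermediate function stays in the restricted tensor product.

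\emph{Trade-offs.} Your approach avoids the global Stone--von Neumann/Schur step. That step tacitly needs the extension to $L^2(\A)$, irreducibility, and a check that each generator implements the stated matrix. In exchange, your lemma depends on the correctness of the local scalar computation. The paper's proof of this lemma never needs to know the local scalars.

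\emph{What your route adds.} Because you identify $c(a)=\prod_v\gamma_v(2a)$ explicitly, the lemma becomes Weil's product formula $\prod_v\gamma_v(c)=1$ for every $c\in\Q^\times$, since $2a$ ranges over all of $\Q^\times$. From this, $\prod_v\mu_v(a,b)=1$ follows at once. The operator comparison $C(a)C(b)$ versus $C(ab)$ in Corollary~\ref{cor:product-defect} is then unnecessary. It also makes plain that the only global input is Poisson summation feeding the Weil product formula.
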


\begin{proof}
The matrix identity
\[
m(a)=n(a)\bar n(-a^{-1})n(a)w^{-1}
\]
holds in $\mathrm{SL}_2(\Q)$.  Hence $B(a)$ and $M(a)$ implement the same automorphism of the adelic Heisenberg representation.  These operators are initially defined on $\Sch(\A)$, preserve $\Sch(\A)$, and extend to the irreducible adelic Schr\"odinger representation on $L^2(\A)$; the adelic Stone--von Neumann theorem then implies, by Schur's lemma, that there is a scalar $c(a)\in\C^\times$ such that
\[
B(a)=c(a)M(a)
\]
on $\Sch(\A)$.

By Lemma~\ref{lem:theta-invariance}, $N(a)$, $\bar N(-a^{-1})$, and $N(a)$ preserve $\Theta$.  The Fourier invariance also gives invariance under $\mathcal F^{-1}$: applying $\Theta(\mathcal F\Psi)=\Theta(\Psi)$ to $\Psi=\mathcal F^{-1}\Phi$ gives $\Theta(\mathcal F^{-1}\Phi)=\Theta(\Phi)$.  Thus $B(a)$ preserves $\Theta$.  The operator $M(a)$ also preserves $\Theta$ by the same lemma.  Choose, for example, a nonnegative product test function $\Phi\in\Sch(\A)$ with $\Phi(0)\neq0$ and sufficiently small support at the finite places; then $\Theta(\Phi)\neq0$, and since $\Theta(M(a)\Phi)=\Theta(\Phi)$, also $\Theta(M(a)\Phi)\neq0$.  Therefore
\[
\Theta(B(a)\Phi)=\Theta(\Phi)=\Theta(M(a)\Phi).
\]
Since $B(a)=c(a)M(a)$, this gives
\[
c(a)\Theta(M(a)\Phi)=\Theta(M(a)\Phi),
\]
hence $c(a)=1$.  Therefore $B(a)=M(a)$.
\end{proof}

\begin{lemma}[restricted tensor comparison]\label{lem:restricted-tensor-comparison}
For every $a\in\Q^\times$,
\[
B(a)=\bigotimes_v' B_v(a),
\qquad
C(a)=\bigotimes_v' C_v(a),
\]
as operators on the restricted tensor product
\[
\Sch(\A)\cong \bigotimes_v' \Sch(\Q_v)
\]
with respect to the standard vectors $\mathbf 1_{\Z_p}$ at almost all finite places.
\end{lemma}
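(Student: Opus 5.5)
The plan is to prove the factorization generator by generator and then take products. Each global operator in the word $B(a)=N(a)\,\bar N(-a^{-1})\,N(a)\,\mathcal F^{-1}$ factors as a restricted tensor product of the corresponding local operators. Since composition of restricted tensor products of operators is computed placewise, the statement for $B(a)$ follows. The statement for $C(a)=B(1)^{-1}B(a)$ then follows by applying the same fact to $B(1)$ and inverting placewise.

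Concretely, I would first check the identities on pure tensors $\Phi=\otimes_v\phi_v$ with $\phi_p=\mathbf 1_{\Z_p}$ for almost all $p$. These span $\Sch(\A)$, and all operators involved are linear, so this suffices.

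For $N(t)$ with $t\in\Q$, the definition $\psi(x)=\prod_v\psi_v(x_v)$ gives
\[
(N(t)\Phi)(x)=\prod_v\psi_v\!\left(\tfrac{t x_v^2}{2}\right)\phi_v(x_v),
\]
so $N(t)=\otimes_v' N_v(t)$.

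For $\mathcal F$, the adelic measure is the product of the self-dual local measures. Fubini on product functions then gives $\mathcal F\Phi=\otimes_v\mathcal F_v\phi_v$, hence $\mathcal F=\otimes_v'\mathcal F_v$ and $\mathcal F^{-1}=\otimes_v'\mathcal F_v^{-1}$.

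It follows that $\bar N(u)=\mathcal F N(-u)\mathcal F^{-1}=\otimes_v'\bar N_v(u)$. Composing these four factorizations yields $B(a)=\otimes_v' B_v(a)$.

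The one genuine issue is well-definedness on the restricted tensor product. Each factor must fix $\mathbf 1_{\Z_p}$ for almost all $p$; otherwise the infinite tensor product of operators does not make sense on $\bigotimes_v'\Sch(\Q_v)$. I would verify this place by place, using $\psi_p$ trivial on $\Z_p$ and self-duality of $\Z_p$:
\begin{itemize}
\item For $p\neq 2$ with $t\in\Z_p$, the phase $\psi_p(tx^2/2)$ is trivial on $\Z_p$, so $N_p(t)\mathbf 1_{\Z_p}=\mathbf 1_{\Z_p}$.
\item Since $\operatorname{vol}(\Z_p)=1$ and $\Z_p$ is self-dual for $\psi_p$, we get $\mathcal F_p\mathbf 1_{\Z_p}=\mathbf 1_{\Z_p}$.
\item Combining the two, $\bar N_p(u)$ fixes $\mathbf 1_{\Z_p}$ whenever $u\in\Z_p$.
\end{itemize}
Given $a\in\Q^\times$, both $a$ and $a^{-1}$ lie in $\Z_p$ for all $p\nmid 2\cdot\mathrm{num}(a)\mathrm{den}(a)$. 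Outside this finite set every factor, and hence $B_p(a)$, fixes $\mathbf 1_{\Z_p}$.

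As a consistency check, Proposition~\ref{prop:local-diagonal-scalar} gives $B_p(a)=\gamma_p(2a)M_p^{\mathrm{std}}(a)$ with $\gamma_p(2a)=1$ at these unramified places. This agrees with $M^{\mathrm{std}}_p(a)\mathbf 1_{\Z_p}=\mathbf 1_{\Z_p}$ for $a\in\Z_p^\times$.

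For $C(a)$, the inverse of a restricted tensor product of invertible operators that almost all fix the standard vector is the restricted tensor product of the inverses. So $B(1)^{-1}=\otimes_v' B_v(1)^{-1}$, and therefore
\[
C(a)=\otimes_v' B_v(1)^{-1}B_v(a)=\otimes_v' C_v(a).
\]
For the same finite set of bad places, $C_p(a)$ fixes $\mathbf 1_{\Z_p}$ at all other $p$.

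I expect the main obstacle to be the Fourier step together with this bookkeeping of the finite exceptional set. One must confirm that the self-dual normalizations make $\mathcal F_p\mathbf 1_{\Z_p}=\mathbf 1_{\Z_p}$ exactly, with no stray constants. One must also check that the place $2$, where $t/2$ may fail to be integral, is always counted among the finitely many exceptions. The remaining steps are formal.
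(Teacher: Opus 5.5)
Your proposal is correct and follows essentially the same route as the paper. You factor $N(t)$, $\mathcal F$ and $\bar N(u)$ placewise on pure tensors, check that the local factors fix $\mathbf 1_{\Z_p}$ outside a finite set of places including $2$, and invert $B(1)$ placewise to get the statement for $C(a)$. The only difference is minor: you verify the standard-vector fixing generator by generator, while the paper asserts it and also points to Proposition~\ref{prop:local-diagonal-scalar} as an equivalent justification, which you use only as a consistency check.
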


\begin{proof}
It is enough to check the identities on pure tensors
\[
\Phi=\phi_\infty\otimes\bigotimes_p'\phi_p,
\]
with $\phi_p=\mathbf 1_{\Z_p}$ for all but finitely many $p$.  The self-dual adelic measure is the restricted product of the self-dual local measures, and the global character is the product of the local characters.  Hence
\[
\mathcal F\Phi=\bigotimes_v'\mathcal F_v\phi_v,
\qquad
N(t)\Phi=\bigotimes_v' N_v(t)\phi_v
\qquad (t\in\Q).
\]
The same factorization gives
\[
\bar N(u)\Phi
=
\mathcal F N(-u)\mathcal F^{-1}\Phi
=
\bigotimes_v'\bar N_v(u)\phi_v.
\]
Therefore the word
\[
B(a)=N(a)\bar N(-a^{-1})N(a)\mathcal F^{-1}
\]
acts on pure tensors as the restricted tensor product of the local words $B_v(a)$.

At almost all finite places, $a\in\Z_p^\times$, $2\in\Z_p^\times$, and the additive character has conductor $\Z_p$.  Then $N_v(a)$, $\bar N_v(-a^{-1})$, $\mathcal F_v$, and $B_v(a)$ preserve the standard vector $\mathbf 1_{\Z_p}$; equivalently, Proposition~\ref{prop:local-diagonal-scalar} gives $B_v(a)=M_v^{\mathrm{std}}(a)$ and $B_v(1)=\Id$ on that vector.  Thus the restricted tensor products above are well-defined.

Since $B(1)=\bigotimes_v'B_v(1)$ by the first identity and $B(1)$ is invertible, its inverse is the restricted tensor product of the inverses $B_v(1)^{-1}$.  Multiplying the identities for $B(1)^{-1}$ and $B(a)$ gives
\[
C(a)=B(1)^{-1}B(a)=\bigotimes_v'\bigl(B_v(1)^{-1}B_v(a)\bigr)
=\bigotimes_v'C_v(a).
\]
\end{proof}

\subsection{Global cancellation of the local defects}

We can now compare the local and global realizations of the same Bruhat word.

\begin{corollary}\label{cor:product-defect}
For all $a,b\in\Q^\times$,
\[
\prod_v \mu_v(a,b)=1.
\]
Equivalently,
\[
\prod_v \frac{\gamma_v(a)\gamma_v(b)}{\gamma_v(1)\gamma_v(ab)}=1.
\]
\end{corollary}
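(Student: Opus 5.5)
The plan is to compare the global and local multiplication laws for the normalized Bruhat-word operators $C(a)$. On the global side, Lemma~\ref{lem:global-diagonal-identity} gives $B(a)=M(a)$ for every $a\in\Q^\times$; in particular $B(1)=M(1)=\Id$. Hence $C(a)=M(a)$. Since $|\cdot|_\A$ is multiplicative, the concrete adelic scaling operators satisfy $M(a)M(b)=M(ab)$ strictly. So
\[
C(a)C(b)=C(ab)\qquad(a,b\in\Q^\times)
\]
as operators on $\Sch(\A)$.

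On the local side, Lemma~\ref{lem:restricted-tensor-comparison} gives $C(a)=\bigotimes_v' C_v(a)$. I will evaluate both sides of the global identity on a pure tensor $\Phi=\bigotimes_v'\phi_v$ with each $\phi_v\neq0$; for instance, take $\phi_p=\mathbf 1_{\Z_p}$ at all finite $p$ and a Gaussian at $\infty$. Proposition~\ref{prop:local-diagonal-scalar} gives $C_v(a)C_v(b)=\mu_v(a,b)C_v(ab)$ at each place. Therefore
\[
C(a)C(b)\Phi=\bigotimes_v'\mu_v(a,b)\,C_v(ab)\phi_v=\Bigl(\prod_v\mu_v(a,b)\Bigr)\,C(ab)\Phi .
\]
Comparing with the global identity, and using that $C(ab)\Phi=M(ab)\Phi\neq0$, yields $\prod_v\mu_v(a,b)=1$. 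The equivalent form in the statement then follows from the formula for $\mu_v$ in Proposition~\ref{prop:local-diagonal-scalar}.

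The main obstacle is justifying that the product of scalars can be pulled out of the restricted tensor product. This requires $\mu_v(a,b)=1$ for almost all $v$, so that the product is finite and the manipulation is legitimate. I will verify this directly. For $p\nmid 2ab$, Lemma~\ref{lem:restricted-tensor-comparison} (via Proposition~\ref{prop:local-diagonal-scalar}) shows that $C_v(a)$, $C_v(b)$ and $C_v(ab)$ all act on $\mathbf 1_{\Z_p}$ as the unitary scalings $M^{\mathrm{std}}_v$, which fix $\mathbf 1_{\Z_p}$ because $a,b\in\Z_p^\times$. Applying $C_v(a)C_v(b)=\mu_v(a,b)C_v(ab)$ to $\mathbf 1_{\Z_p}$ then gives $\mu_v(a,b)=1$. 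Alternatively, one can cite $\Hilb{a}{b}_p=1$ for units at odd $p$.

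With only finitely many nontrivial factors, the tensor-product scalar manipulation is a finite computation on the places in $S=\{\infty,2\}\cup\{p\mid ab\}$. The conclusion follows.
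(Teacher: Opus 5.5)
Your proposal is correct and follows the paper's own proof essentially step for step. On the global side, $C(a)=M(a)$ is strictly multiplicative by Lemma~\ref{lem:global-diagonal-identity}; on the local side, Lemma~\ref{lem:restricted-tensor-comparison} and Proposition~\ref{prop:local-diagonal-scalar} give $C(a)C(b)=\bigl(\prod_v\mu_v(a,b)\bigr)C(ab)$, and comparing the two gives the result. Your extras (testing on a pure tensor with $C(ab)\Phi\neq0$, and checking $\mu_p(a,b)=1$ on $\mathbf 1_{\Z_p}$ rather than citing $\Hilb{a}{b}_p=1$) are harmless refinements; note only that for rational $a,b$ the condition ``$p\nmid 2ab$'' should read ``$p$ odd with $a,b\in\Z_p^\times$'' (e.g.\ $a=p$, $b=p^{-1}$ has $ab=1$ but $a,b\notin\Z_p^\times$).
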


\begin{proof}
By Proposition~\ref{prop:local-diagonal-scalar}, each normalized local Bruhat-word family satisfies
\[
C_v(a)C_v(b)=\mu_v(a,b)C_v(ab).
\]
By Lemma~\ref{lem:restricted-tensor-comparison}, these local identities may be multiplied in the restricted tensor product.  Hence
\[
C(a)C(b)=\left(\prod_v\mu_v(a,b)\right)C(ab).
\]
The product is finite, since $\mu_v(a,b)=\Hilb{a}{b}_v=1$ for all but finitely many $v$.

On the other hand, Lemma~\ref{lem:global-diagonal-identity} gives $B(a)=M(a)$ for every rational $a$, and in particular $B(1)=M(1)=\Id$.  Hence
\[
C(a)=B(1)^{-1}B(a)=M(a).
\]
The global scaling operators are strictly multiplicative:
\[
C(a)C(b)=M(a)M(b)=M(ab)=C(ab).
\]
Comparing the two identities yields
\[
\prod_v\mu_v(a,b)=1.
\]
The second displayed formula is the definition of $\mu_v(a,b)$ from Proposition~\ref{prop:local-diagonal-scalar}.
\end{proof}

\section{From the local Maslov defects to Hilbert reciprocity}\label{sec:hilbert-reciprocity}

We can now derive Hilbert reciprocity by multiplying the local defect formula over all places.

\begin{theorem}[Hilbert reciprocity]\label{thm:hilbert-reciprocity}
For all $a,b\in\Q^\times$,
\[
\prod_v\Hilb{a}{b}_v=1.
\]
\end{theorem}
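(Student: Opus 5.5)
The argument is a short combination of the local bridge and the global cancellation law. Fix $a,b\in\Q^\times$ and regard them as elements of $\Q_v^\times$ for every place $v$ via the diagonal embedding.

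First, Theorem~\ref{thm:local-mult} gives, at each place $v$,
\[
\Hilb{a}{b}_v=\frac{\gamma_v(a)\gamma_v(b)}{\gamma_v(1)\gamma_v(ab)}=\mu_v(a,b).
\]
Here we divide by $\gamma_v(1)\gamma_v(ab)$, which is legitimate since $|\gamma_v|=1$. This identification is also recorded in Proposition~\ref{prop:local-diagonal-scalar}.

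Second, the product $\prod_v\Hilb{a}{b}_v$ is a finite product. For every prime $p\nmid 2$ at which $a,b\in\Z_p^\times$, the Hilbert symbol is $1$: both are units at an odd prime, so the standard tame formula gives $\Hilb{a}{b}_p=1$.

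Third, invoke Corollary~\ref{cor:product-defect}, which states $\prod_v\mu_v(a,b)=1$ for rational $a,b$. Substituting the first step gives $\prod_v\Hilb{a}{b}_v=1$.

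The only point needing care is consistency of conventions between the two inputs. The $\mu_v$ in Corollary~\ref{cor:product-defect} must be the same quantity as the Weil-index ratio built from the characters $\psi_v$ and self-dual measures fixed in Section~\ref{sec:local-weil}. Proposition~\ref{prop:local-diagonal-scalar} guarantees this: it defines $\mu_v$ through the same local operators whose tensor product is used adelically. There is therefore no genuine obstacle, and the substantive work lies entirely in the already established Theorem~\ref{thm:local-mult} and Corollary~\ref{cor:product-defect}.
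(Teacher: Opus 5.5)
Your proposal is correct and is essentially the paper's proof: the paper also multiplies the local identity of Theorem~\ref{thm:local-mult} over all places and uses Corollary~\ref{cor:product-defect} to conclude that the product of the Weil-index ratios, and hence of the Hilbert symbols, equals $1$. Your extra remarks, on finiteness of the product and on $\mu_v$ being the same quantity in both inputs, are already built into the paper's proofs of Proposition~\ref{prop:local-diagonal-scalar} and Corollary~\ref{cor:product-defect}.
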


\begin{proof}
Multiply the local identity \eqref{eq:local-multiplicativity} over all places $v$:
\[
\prod_v \frac{\gamma_v(a)\gamma_v(b)}{\gamma_v(1)\gamma_v(ab)}
=
\prod_v \Hilb{a}{b}_v.
\]
By Corollary~\ref{cor:product-defect}, the left-hand side is $1$.  Therefore
\[
\prod_v\Hilb{a}{b}_v=1.
\]
\end{proof}
\section{Evaluation of the relevant local Hilbert symbols}

We now specialize to distinct odd primes $p$ and $q$.  The remaining arithmetic input is the local evaluation of the Hilbert symbols.  The sign comes from the factor at $2$.

\begin{proposition}\label{prop:local-symbols}
Let $p$ and $q$ be distinct odd primes.  Then:
\begin{enumerate}[label=\textup{(\roman*)}]
\item For every finite prime $\ell\notin\{2,p,q\}$,
\[
\Hilb{p}{q}_\ell=1.
\]
\item At $p$ and $q$ one has
\[
\Hilb{p}{q}_p=\leg{q}{p},
\qquad
\Hilb{p}{q}_q=\leg{p}{q}.
\]
\item At the real place,
\[
\Hilb{p}{q}_\infty=1.
\]
\item At $2$ one has
\[
\Hilb{p}{q}_2=(-1)^{\frac{(p-1)(q-1)}4}.
\]
\end{enumerate}
\end{proposition}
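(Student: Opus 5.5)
The plan is to use the explicit formulas for the local Hilbert symbol from \cite[Ch.~III, Thm.~1]{SerreLocal}, together with the norm/solvability characterization recalled before Theorem~\ref{thm:local-mult}. Write $a=\ell^{\alpha}u$ and $b=\ell^{\beta}w$ with $u,w\in\Z_\ell^\times$. For odd $\ell$ the formula is
\[
\Hilb{a}{b}_\ell=(-1)^{\alpha\beta\varepsilon(\ell)}\leg{u}{\ell}^{\beta}\leg{w}{\ell}^{\alpha},
\qquad \varepsilon(\ell)=\tfrac{\ell-1}{2}.
\]
At $2$ it is
\[
\Hilb{a}{b}_2=(-1)^{\varepsilon(u)\varepsilon(w)+\alpha\omega(w)+\beta\omega(u)},
\qquad \omega(u)=\tfrac{u^2-1}{8}.
\]
Each of (i)--(iv) then becomes a direct substitution. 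To keep the argument closer to first principles, I would also sketch the elementary reasons.

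For (i), take $\ell\notin\{2,p,q\}$. Then $\alpha=\beta=0$, so the formula gives $1$ immediately. Elementarily: $p$ and $q$ are units, and $z^2=px^2+qy^2$ has a nonzero solution modulo $\ell$ by the Chevalley--Warning/pigeonhole argument. That solution has a nonzero coordinate and is a smooth point of the conic since $\ell$ is odd, so Hensel's lemma lifts it.

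For (ii), at $\ell=p$ we have $\alpha=1$, $\beta=0$, $u=1$, $w=q$, and the formula yields $\leg{q}{p}$. Directly: $\Hilb{p}{q}_p=1$ iff $q$ is a norm from $\Q_p(\sqrt p)$. This extension is ramified, its norm group has index $2$ and contains $-p$, and its unit norms are exactly the squares. So for the unit $q$ the symbol is $\leg{q}{p}$. The case $\ell=q$ is symmetric by symmetry of the Hilbert symbol.

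For (iii), $p>0$, so $p$ is a square in $\R$ and $z^2=px^2+qy^2$ has the solution $(x,y,z)=(1,0,\sqrt p)$.

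For (iv), at $2$ we have $\alpha=\beta=0$, so $\Hilb{p}{q}_2=(-1)^{\varepsilon(p)\varepsilon(q)}=(-1)^{\frac{(p-1)(q-1)}4}$. This is the step I expect to need the most care if it is proved rather than quoted. The elementary route splits by residues mod $4$.

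If $p\equiv1\pmod4$ or $q\equiv1\pmod4$, say $p\equiv1\pmod4$, I would produce a solution of $z^2=px^2+qy^2$ in $\Q_2$. If $p\equiv1\pmod8$, then $p$ is a $2$-adic square. If $p\equiv5\pmod8$, choose $x,y\in\{0,1,2\}$ so that $px^2+qy^2\equiv1\pmod8$, or twice a square class; then lift with the $2$-adic Hensel criterion (a unit $\equiv1\pmod8$ is a square).

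If $p\equiv q\equiv3\pmod4$, I would show that no primitive solution exists. Take $x,y,z\in\Z_2$, not all even, and reduce modulo $4$. Squares are $0$ or $1$ mod $4$, so $z^2\equiv -x^2-y^2\pmod4$. Checking parities forces $x,y,z$ all even, which is a contradiction.

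The only delicate point is checking the $2$-adic lifting in the case $p\equiv5\pmod8$. There I would simply quote Serre's formula rather than enumerate residues.
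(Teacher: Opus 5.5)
Your proposal is correct and follows the paper's route: both proofs substitute into Serre's explicit Hilbert-symbol formulas (at odd $\ell$, at $2$ for units, and the sign rule at $\infty$), and your $\alpha,\beta$ bookkeeping matches the paper's case by case. Your optional elementary sketches are also sound, and the step you planned to quote closes directly: for $p\equiv 5\pmod 8$ take $(x,y)=(1,2)$, since $p+4q\equiv 1\pmod 8$ is then a $2$-adic square.
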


\begin{proof}
The assertions are standard, but we spell out the relevant reductions.

(i) For an odd prime $\ell$, write $a=\ell^\alpha u$ and $b=\ell^\beta v$ with $u,v\in\Z_\ell^\times$.  The standard formula is
\[
\Hilb{a}{b}_\ell
=
(-1)^{\alpha\beta(\ell-1)/2}
\leg{\bar u}{\ell}^{\beta}
\leg{\bar v}{\ell}^{\alpha}.
\]
If $\ell\notin\{2,p,q\}$, then $v_\ell(p)=v_\ell(q)=0$, so $\alpha=\beta=0$ and hence $\Hilb{p}{q}_\ell=1$.

(ii) Let $u\in\Z_p^\times$.  The standard formula for the Hilbert symbol at an odd prime says \cite[Ch.~III, \S1, Thm.~1]{Serre}
\[
\Hilb{p}{u}_p=\leg{u}{p}.
\]
Applying this with $u=q$ gives $\Hilb{p}{q}_p=\leg{q}{p}$.  Interchanging the roles of $p$ and $q$ gives the second formula.

(iii) Over $\R$, the Hilbert symbol is $-1$ only when both arguments are negative.  Since $p$ and $q$ are positive, $\Hilb{p}{q}_\infty=1$.

(iv) For odd $u,v\in\Z_2^\times$, one has the explicit formula
\[
\Hilb{u}{v}_2=(-1)^{\frac{u-1}{2}\frac{v-1}{2}},
\]
proved by a short Hensel-lemma case analysis on $\Z_2^\times/(\Z_2^\times)^2\cong\{1,3,5,7\}$; see Serre~\cite[Chapter~III, \S1, Proposition~4]{Serre}.  Applying this with $u=p$ and $v=q$ gives
\[
\Hilb{p}{q}_2=(-1)^{\frac{p-1}{2}\frac{q-1}{2}}=(-1)^{\frac{(p-1)(q-1)}4}.
\]
\end{proof}

\section{Quadratic reciprocity}

We can now complete the argument.

\begin{theorem}[Quadratic reciprocity]\label{thm:QR}
For distinct odd primes $p$ and $q$,
\[
\leg{p}{q}\leg{q}{p}=(-1)^{\frac{(p-1)(q-1)}4}.
\]
\end{theorem}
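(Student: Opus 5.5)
The plan is to specialize Hilbert reciprocity (Theorem~\ref{thm:hilbert-reciprocity}) to $a=p$, $b=q$, and then evaluate every local factor with Proposition~\ref{prop:local-symbols}. Since $p,q\in\Q^\times$, Theorem~\ref{thm:hilbert-reciprocity} gives
\[
\prod_v \Hilb{p}{q}_v = 1,
\]
where the product runs over all places $v$ of $\Q$.

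Next we split the product over the places $\ell\notin\{2,p,q\}$, the places $p$ and $q$, the place $2$, and the place $\infty$. Part (i) of Proposition~\ref{prop:local-symbols} kills every factor at $\ell\notin\{2,p,q,\infty\}$, so the product is in fact finite. Part (iii) gives $\Hilb{p}{q}_\infty=1$. Parts (ii) and (iv) supply the three remaining factors. Hence
\[
\leg{q}{p}\leg{p}{q}(-1)^{\frac{(p-1)(q-1)}4}=1.
\]
All three factors are signs $\pm1$, so we may move the $2$-adic factor to the other side; this yields exactly the statement of Theorem~\ref{thm:QR}.

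There is no real obstacle at this stage, since all the work sits in the earlier results. The only points worth checking are bookkeeping. First, the places $2$, $p$ and $q$ are distinct because $p,q$ are odd and distinct, so no factor is counted twice. Second, the convention for $\Hilb{p}{q}_p$ in (ii) must match the one in Theorem~\ref{thm:hilbert-reciprocity}; it does, since both use the same symmetric symbol $\Hilb{\cdot}{\cdot}_v$. If anything were to fail, it would be upstream: in Corollary~\ref{cor:product-defect}, whose restricted-tensor comparison must justify that the global product of local defects makes sense; but that result is taken as given here.
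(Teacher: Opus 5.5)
Your proposal is correct and matches the paper's proof: apply Theorem~\ref{thm:hilbert-reciprocity} with $(a,b)=(p,q)$, use Proposition~\ref{prop:local-symbols} to discard every factor except those at $p$, $q$ and $2$, and rearrange the resulting identity of signs. Your bookkeeping remarks, namely that the places $2$, $p$, $q$ are distinct and that the product is finite, are also correct; the paper leaves them implicit.
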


\begin{proof}
By Theorem~\ref{thm:hilbert-reciprocity},
\[
\prod_v \Hilb{p}{q}_v=1.
\]
Using Proposition~\ref{prop:local-symbols}, all local factors are $1$ except those at $p$, $q$, and $2$.  Therefore
\[
1=\Hilb{p}{q}_p\,\Hilb{p}{q}_q\,\Hilb{p}{q}_2
=\leg{q}{p}\leg{p}{q}(-1)^{\frac{(p-1)(q-1)}4}.
\]
Rearranging gives
\[
\leg{p}{q}\leg{q}{p}=(-1)^{\frac{(p-1)(q-1)}4}.
\]
\end{proof}

\section{Interpretation in terms of Kashiwara--Maslov phases}

For each place $v$ and each $a\in\Q_v^\times$, Corollary~\ref{cor:basic-phase} identifies the local phase of the ordered Lagrangian triple $(L_\infty,L_a,L_0)$ with the local Weil index $\gamma_v(a)$.  The local multiplicativity law of Theorem~\ref{thm:local-mult} then says that the Hilbert symbol is the defect of strict multiplicativity of these triple phases:
\[
\Hilb{a}{b}_v
=
\frac{\gamma_v(a)\gamma_v(b)}{\gamma_v(1)\gamma_v(ab)}.
\]
Corollary~\ref{cor:product-defect} gives $\prod_v\mu_v(a,b)=1$ for rational pairs.  Multiplying the local formula over all places gives Hilbert reciprocity; taking $(a,b)=(p,q)$ gives quadratic reciprocity.

Thus the classical sign
\[
(-1)^{\frac{(p-1)(q-1)}4}
\]
is the $2$-adic component of the global cancellation law for Kashiwara--Maslov phases.

\appendix

\section{Rank-two metaplectic formulas and the local bridge}
\label{app:rank-two-local-bridge}

This appendix supplies the local calculations used in the body of the paper.  The first calculation is the rank-two Maslov formula in the Schr\"odinger model.  The second is the Gaussian evaluation of the diagonal Bruhat word with the $\psi(tx^2/2)$ convention.  The third is the local bridge from Weil indices to Hilbert symbols, proved from the Hasse-invariant formula for Weil indices of diagonal quadratic forms.

Throughout, $F$ denotes a local field of characteristic different from $2$, $\psi:F\to\C^\times$ is a fixed nontrivial additive character, and Haar measures are chosen self-dual with respect to $\psi$.

\subsection{The rank-two Maslov scalar}

Let $(V,\omega)$ be a two-dimensional symplectic $F$-vector space.  The Heisenberg group attached to $(V,\omega)$ is
\[
H(V)=V\times F,
\qquad
(v,t)(v',t')=\bigl(v+v',\,t+t'+\tfrac12\omega(v,v')\bigr).
\]
For a Lagrangian line $L\subset V$, let $\mathcal H_L$ denote the Schr\"odinger model induced from the character
\[
(\ell,t)\longmapsto \psi(t),
\qquad \ell\in L.
\]
If $L$ and $M$ are transverse, the normalized geometric intertwiner
\[
T_{M,L}:\mathcal H_L\longrightarrow \mathcal H_M
\]
is given by integration over $M$:
\begin{equation}
\label{eq:app-intertwiner-corrected}
(T_{M,L}f)(h)=\int_M f(mh)\,dm,
\end{equation}
with the self-dual Haar measure.  This is the Stone--von Neumann intertwiner in the chosen normalization.

\begin{lemma}[closed-triangle form]
\label{lem:app-closed-triangle}
Let $L_1,L_2,L_3$ be pairwise transverse Lagrangians in $V$.  The composition
\[
T_{L_3,L_2}T_{L_2,L_1}T_{L_1,L_3}
\]
is scalar, and its scalar is the normalized oscillatory integral over the closed-triangle space
\[
K(L_1,L_2,L_3)=\{(x_1,x_2,x_3):x_i\in L_i,\ x_1+x_2+x_3=0\}
\]
with phase
\[
q_{L_1,L_2,L_3}(x_1,x_2,x_3)=\omega(x_1,x_2).
\]
Equivalently,
\[
T_{L_3,L_2}T_{L_2,L_1}T_{L_1,L_3}
=\gamma_\psi(q_{L_1,L_2,L_3})\Id.
\]
\end{lemma}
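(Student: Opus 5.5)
The plan has three steps: first show the composition is scalar, then compute the scalar as an oscillatory integral over the closed-triangle space, and finally identify that integral with the Weil index.

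\emph{Scalarity.} Each $T_{M,L}$ is defined by integrating over $M$ as in \eqref{eq:app-intertwiner-corrected}. It intertwines the left-translation action of $H(V)$ on $\mathcal H_L$ with the action on $\mathcal H_M$, because the integral is taken over the left variable and the induced models carry the regular right action. The threefold composition is therefore an $H(V)$-endomorphism of the irreducible representation $\mathcal H_{L_3}$. By Stone--von Neumann and Schur's lemma it is a scalar $c\cdot\Id$.

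\emph{Computing $c$.} Apply the composition to a test vector $f\in\mathcal H_{L_3}$ and evaluate at the identity of $H(V)$. Unwinding the three integrals gives
\[
\int_{L_1}\int_{L_2}\int_{L_3} f\bigl((x_1,0)(x_2,0)(x_3,0)\bigr)\,dx_3\,dx_2\,dx_1 .
\]
The product $(x_1,0)(x_2,0)(x_3,0)$ equals $\bigl(x_1+x_2+x_3,\ \tfrac12\omega(x_1,x_2)+\tfrac12\omega(x_1+x_2,x_3)\bigr)$. I would change variables by writing $x_3=-(x_1+x_2)+y$ with $y\in L_3$, where $x_1+x_2$ is split along $V=L_3\oplus L_1$ or a similar transverse decomposition. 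The left $L_3$-equivariance of $f$ then pulls out the factor $\psi$ evaluated at a central term. On the locus $x_1+x_2+x_3=0$ that central term is $\tfrac12\omega(x_1,x_2)$ plus correction terms, and the corrections should cancel by isotropy of each $L_i$.

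What remains is an integral of $f$ against the oscillatory factor $\psi(q_{L_1,L_2,L_3})$, integrated over a copy of $K(L_1,L_2,L_3)$ (a line, by Propositions~\ref{prop:finite-slopes} and~\ref{prop:infty-slope}). A compensating integral of $f$ over the complementary directions reproduces $f(e)$. The cleanest way to organise this is to choose coordinates: take $L_3=L_\infty$, $L_1=L_0$, $L_2=L_a$, using transitivity of $\mathrm{SL}_2(F)$ on pairwise transverse triples up to the square class of the Kashiwara form. In these coordinates each $T$ is an explicit Fourier transform or quadratic-phase Fourier transform on $\Sch(F)$. The composition becomes $\mathcal F\circ(\text{chirp})\circ\mathcal F$ applied to $\phi$, and \eqref{eq:local-gaussian} evaluates it directly as $\gamma_\psi(a)$ times a power of $|a|$.

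\emph{Identifying the constant.} The power of $|a|$ must cancel against the Jacobians of the self-dual measures on the three lines. I expect this bookkeeping to be the main obstacle: showing that the normalized oscillatory integral over $K(L_1,L_2,L_3)$, with $q$ computed in Proposition~\ref{prop:infty-slope} to be $\langle a\rangle$ for $(L_\infty,L_a,L_0)$, carries exactly the Weil-index normalization of \eqref{eq:local-gaussian}, so that no stray $|2a|^{\pm1/2}$ or $\gamma_\psi(1)$ survives. I would first check the case $a=1$ directly, using $\mathcal F^2\phi(x)=\phi(-x)$ together with the chirp identity. I would then use Lemma~\ref{lem:square-invariance} and scaling by $m(c)$ to pass to general $a$. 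Finally, invariance of the composition under the symplectic group, together with the dependence of $\gamma_\psi(q)$ only on the isometry class of $q$, gives the coordinate-free statement.
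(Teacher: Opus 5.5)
\textbf{There is a genuine gap, at the factor $\tfrac12$.} Your skeleton matches the paper's: Schur's lemma for scalarity, unwinding the triple kernel with the Heisenberg law, and localizing to $K(L_1,L_2,L_3)$. Your plan to fix the constant by an explicit normal-form computation, instead of citing Rao and Lion--Vergne, would be a legitimate alternative. But the argument breaks exactly where the lemma has content.

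You correctly record the central term $\tfrac12\omega(x_1,x_2)+\tfrac12\omega(x_1+x_2,x_3)$, but then say the surviving oscillatory factor is $\psi(q_{L_1,L_2,L_3})$. It is $\psi(\tfrac12 q_{L_1,L_2,L_3})$. The $x_3$-integration produces a delta function forcing $x_1+x_2\in L_3$, and the terms coming from $L_3$-equivariance vanish there. The $\tfrac12$ from the group law, however, remains.

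Concretely, take $(L_\infty,L_a,L_0)$ with the parametrizations $t\mapsto te_t$. The scalar is the regularized integral $\int_F\psi(\tfrac a2t^2)\,dt$. By \eqref{eq:local-gaussian} this equals $\gamma_\psi(a/2)|a|^{-1/2}=\gamma_\psi(2a)|a|^{-1/2}$. So your proposed ``direct evaluation as $\gamma_\psi(a)$ times a power of $|a|$'' does not come out. It gives $\gamma_\psi(2a)$, the same factor as in Proposition~\ref{prop:app-local-diagonal-scalar}.

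\textbf{This is not measure bookkeeping.} The ratio $\gamma_\psi(2a)/\gamma_\psi(a)=\gamma_\psi(2)\gamma_\psi(1)^{-1}\Hilb{2}{a}$ is unimodular and not identically $1$; over $\Q_3$ with $a=3$ it equals $-1$. Any renormalization of Haar measures changes the scalar only by a positive constant. Normalizing each $T_{M,L}$ to be unitary removes every stray $|2a|^{\pm1/2}$ at once, but it cannot touch the phase. The real obstacle is the square class of $2$ inside the Weil index.

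The paper's proof meets this directly. It notes that the raw phase is $\psi(q/2)$, and identifies the scalar with $\gamma_\psi(q)$ only through the Rao/Lion--Vergne normalization. In that convention, ``the Weil index of $q$'' means the index attached to the character $\psi(\tfrac12 q)$. Your computation proves the lemma once you adopt that convention explicitly. With the one-dimensional normalization of \eqref{eq:local-gaussian}, it instead proves the formula with $\gamma_\psi(\tfrac12 q)$.

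\textbf{Two smaller slips.}
\begin{itemize}
\item Your normal form $(L_1,L_2,L_3)=(L_0,L_a,L_\infty)$ is a cyclic rotation of $(L_\infty,L_0,L_a)$. Its Kashiwara form is therefore $\langle -a\rangle$, not $\langle a\rangle$. Either use $(L_\infty,L_a,L_0)$ or replace $a$ by $-a$.
\item Scaling by $m(c)$ fixes $L_0$ and $L_\infty$ and sends $L_a$ to $L_{ac^{-2}}$. Starting from $a=1$, it therefore reaches only squares. Lemma~\ref{lem:square-invariance} does not connect different square classes, so the Gaussian evaluation is needed in every class, not just at $a=1$.
\end{itemize}
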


\begin{proof}
We compute the kernel in order to fix the sign.  Applying the three intertwiners to $f\in\mathcal H_{L_3}$ gives
\[
(T_{L_3,L_2}T_{L_2,L_1}T_{L_1,L_3}f)(h)
=
\int_{L_3}\int_{L_2}\int_{L_1}
        f(x_1x_2x_3h)\,dx_1\,dx_2\,dx_3 .
\]
By the Heisenberg group law,
\[
x_1x_2x_3
=
\left(x_1+x_2+x_3,\,
\tfrac12\omega(x_1,x_2)+\tfrac12\omega(x_1+x_2,x_3)\right).
\]
Thus the raw central character contributes
\[
\psi\!\left(\tfrac12\omega(x_1,x_2)+\tfrac12\omega(x_1+x_2,x_3)\right).
\]
Disintegrating the integral along the addition map
\[
L_1\oplus L_2\oplus L_3\longrightarrow V,\qquad
(x_1,x_2,x_3)\longmapsto x_1+x_2+x_3
\]
restricts the phase to the closed-triangle space
\[
K(L_1,L_2,L_3)=\ker(x_1+x_2+x_3).
\]
On this space $x_1+x_2+x_3=0$, so the raw central term is
\[
\tfrac12\omega(x_1,x_2)+\tfrac12\omega(x_1+x_2,x_3)
=\tfrac12\omega(x_1,x_2).
\]
The factor $1/2$ is exactly the factor in the standard Heisenberg group law.  The normalized geometric intertwiner convention used by Rao and by Lion--Vergne attaches the Weil index to the quadratic form twice this raw central phase, namely
\[
q_{L_1,L_2,L_3}(x_1,x_2,x_3)=\omega(x_1,x_2).
\]
Equivalently, the raw oscillatory factor is $\psi(q/2)$, while our one-dimensional notation $\gamma_\psi(\langle a\rangle)$ is the Weil index attached to the quadratic form $aX^2$ with the Fourier convention of \eqref{eq:local-gaussian}.  This is the same convention that produces the factor $\gamma_\psi(2a)$ in Proposition~\ref{prop:app-local-diagonal-scalar}.  With this normalization, the triple kernel is the Weil-normalized oscillatory integral associated with $q_{L_1,L_2,L_3}$.

The composition is an endomorphism of the irreducible Heisenberg representation $\mathcal H_{L_3}$, hence is scalar by Schur's lemma.  The scalar is the Weil index $\gamma_\psi(q_{L_1,L_2,L_3})$, which is the rank-two Maslov cocycle formula specialized to the present notation.
\end{proof}

\begin{theorem}[Standard rank-two formula]
\label{thm:app-rank-two}
Let $L_1,L_2,L_3$ be pairwise transverse Lagrangians in a symplectic plane over $F$.  Then
\[
T_{L_3,L_2}T_{L_2,L_1}T_{L_1,L_3}
=
\gamma_\psi(q_{L_1,L_2,L_3})\,\Id.
\]
In particular, with the slope conventions of the main text,
\[
T_{L_0,L_a}T_{L_a,L_\infty}T_{L_\infty,L_0}=\gamma_\psi(a)\,\Id.
\]
\end{theorem}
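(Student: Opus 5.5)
The first display is exactly the content of Lemma~\ref{lem:app-closed-triangle}, so the plan is to cite that lemma and then spend the real work on the specialization to the slope triple. Concretely, for pairwise transverse $L_1,L_2,L_3$ the lemma already gives that $T_{L_3,L_2}T_{L_2,L_1}T_{L_1,L_3}$ is the scalar $\gamma_\psi(q_{L_1,L_2,L_3})$. I will add one short remark on well-definedness. Pairwise transversality makes each intertwiner \eqref{eq:app-intertwiner-corrected} a genuine Stone--von Neumann intertwiner. Schur's lemma applies because $\mathcal H_{L_3}$ is irreducible. Finally, $\gamma_\psi$ of a nondegenerate quadratic space depends only on its isometry class, and in rank two $K(L_1,L_2,L_3)$ is a line carrying the form $q$.

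For the particular formula I apply this to $(L_1,L_2,L_3)=(L_\infty,L_a,L_0)$ with $a\in F^\times$. The triple is pairwise transverse: $\omega(e_\infty,e_a)=-1$, $\omega(e_\infty,e_0)=-1$ and $\omega(e_a,e_0)=-a$ are all nonzero. Proposition~\ref{prop:infty-slope} with $(a,b)=(a,0)$ says that $K(L_\infty,L_a,L_0)$ is spanned by $\xi_{\infty,a,0}=(-a e_\infty,e_a,-e_0)$ and that $q(\xi)=a$, so $q_{L_\infty,L_a,L_0}\cong\langle a\rangle$. By the notation $\gamma_\psi(a):=\gamma_\psi(\langle a\rangle)$, together with the square-class invariance of Lemma~\ref{lem:square-invariance}, the scalar is $\gamma_\psi(a)$. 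This is the statement of Corollary~\ref{cor:basic-phase}, now proved in the concrete Schrödinger normalization.

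The main obstacle is making sure the normalization matches. Lemma~\ref{lem:app-closed-triangle} produces a raw phase $\psi(\tfrac12\omega(x_1,x_2))$ on $K$, while $\gamma_\psi(\langle a\rangle)$ is defined through \eqref{eq:local-gaussian} with the phase $\psi(ax^2)$. I would check this directly in the model case, since it is a one-variable computation. I would write $T_{L_\infty,L_0}$, $T_{L_a,L_\infty}$ and $T_{L_0,L_a}$ explicitly on functions on $F$, using $L_0$ as the base model. The three maps should come out as a Fourier transform, a chirp multiplication $\psi(\pm a x^2/2)$ (or $\psi(\pm x^2/(2a))$), and another Fourier transform. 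I would then evaluate the composite on a test function using \eqref{eq:local-gaussian}; the factors of $2$ in the chirp and in $|2a|^{-1/2}$ must combine into the index attached to $\langle a\rangle$.

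If the direct check instead gives $\gamma_\psi(a/2)$ or $\gamma_\psi(2a)$, two fixes are available. One is to absorb the factor into the intertwiner normalization, following the Rao/Lion--Vergne convention cited in the lemma. The other is to note that the same convention produces $\gamma_\psi(2a)$ in Proposition~\ref{prop:app-local-diagonal-scalar}; in that case the ratios used in the body are unaffected, because the Hilbert-symbol computation in Proposition~\ref{prop:local-diagonal-scalar} is invariant under $a\mapsto 2a$.
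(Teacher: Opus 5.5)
Your main line is exactly the paper's proof. The first display is quoted from Lemma~\ref{lem:app-closed-triangle}, and the second is Proposition~\ref{prop:infty-slope} with $b=0$, which gives $q_{L_\infty,L_a,L_0}\cong\langle a\rangle$. Your spanning vector $(-ae_\infty,e_a,-e_0)$ and your transversality checks are correct.

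The problem is that your normalization worry is justified. The direct check you propose contradicts the second display in the paper's conventions rather than confirming it. Realize $\mathcal H_{L_0}$ and $\mathcal H_{L_a}$ on functions of $y$ via $f\mapsto f((0,y),0)$, and $\mathcal H_{L_\infty}$ on functions of $x$ via $g\mapsto g((x,0),0)$. Then \eqref{eq:app-intertwiner-corrected} gives
\begin{align*}
T_{L_\infty,L_0}\phi(x)&=\int_F\psi(-xu)\,\phi(u)\,du,\\
T_{L_a,L_\infty}\phi(y)&=\int_F\psi\bigl(sy+\tfrac a2 s^2\bigr)\,\phi(s)\,ds,\\
T_{L_0,L_a}\phi(y)&=\int_F\psi\bigl(\tfrac a2 x^2\bigr)\,\phi(y-ax)\,dx .
\end{align*}
The phase of the composite is $\tfrac a2x^2-asx+\tfrac a2s^2+s(y-u)=\tfrac a2(x-s)^2+s(y-u)$. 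After $x\mapsto x+s$ and Fourier inversion in $(s,u)$, the composite is multiplication by
\[
\int_F\psi(\tfrac a2x^2)\,dx=|a|^{-1/2}\gamma_\psi(a/2),
\]
with $\gamma_\psi$ normalized by \eqref{eq:local-gaussian}. This is the raw factor $\psi(q/2)$ on $K(L_\infty,L_a,L_0)$ that the proof of the lemma itself writes down.

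So the phase is $\gamma_\psi(a/2)=\gamma_\psi(2a)$, and by Theorem~\ref{thm:local-mult} this equals $\gamma_\psi(a)\gamma_\psi(2)\Hilb{2}{a}/\gamma_\psi(1)$. Take $F=\Q_p$, $\psi=\psi_p$, $p\equiv\pm3\pmod 8$ and $a=p$. Then $\gamma_p(1)=\gamma_p(2)=1$ and $\Hilb{2}{p}_p=-1$, so the scalar is $-\gamma_\psi(a)$.

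The paper's proof does not close this gap; it inherits it from the lemma. The lemma attaches $\gamma_\psi$ to ``twice this raw central phase,'' which silently switches to the convention in which $\gamma_\psi(q)$ is the index of $\psi(q/2)$. In \eqref{eq:local-gaussian}, by contrast, $\gamma_\psi(a)$ is the index of $\psi(ax^2)$.

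Consequently neither of your fallbacks proves the displayed identity.
\begin{itemize}
\item Multiplying the intertwiners by unimodular constants that depend on $a$ abandons the canonical normalization \eqref{eq:app-intertwiner-corrected} that the statement refers to.
\item Reading $\gamma_\psi(q)$ in the Rao/Lion--Vergne sense (index of $\psi(q/2)$) makes the first display correct. But its specialization is then $\gamma_\psi(a/2)$ in the notation of \eqref{eq:local-gaussian}, not $\gamma_\psi(a)$.
\item Your remark about the body is true: Section~\ref{sec:global-product} uses only Proposition~\ref{prop:app-local-diagonal-scalar} and Theorem~\ref{thm:local-mult}, and $\mu_v$ is insensitive to $a\mapsto 2a$. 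So the reciprocity argument never uses this theorem, but that remark concerns a different statement.
\end{itemize}

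What your check actually proves is
\[
T_{L_0,L_a}T_{L_a,L_\infty}T_{L_\infty,L_0}=\gamma_\psi(2a)\,\Id,
\]
once the measure on $L_0$ is normalized to remove the positive factor $|a|^{-1/2}$. The version with $\gamma_\psi(a)$ holds only where $\gamma_\psi(2)\Hilb{2}{a}=\gamma_\psi(1)$, for instance at the real place. The same correction applies to Corollary~\ref{cor:basic-phase}, which you cite.
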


\begin{proof}
The first assertion is Lemma~\ref{lem:app-closed-triangle}.  For the second, Proposition~\ref{prop:infty-slope} computes
\[
q_{L_\infty,L_a,L_0}\cong \langle a\rangle.
\]
Therefore
\[
\gamma_\psi(q_{L_\infty,L_a,L_0})=\gamma_\psi(a),
\]
and the displayed scalar identity follows.
\end{proof}

\subsection{The local Bruhat scalar}

We now compute the scalar of the local diagonal Bruhat word with the convention used in Section~\ref{sec:global-product}:
\[
(N(t)\phi)(x)=\psi\!\left(\frac{t x^2}{2}\right)\phi(x),
\qquad
\mathcal F\phi(y)=\int_F\phi(x)\psi(xy)\,dx,
\qquad
\bar N(u)=\mathcal F N(-u)\mathcal F^{-1}.
\]

\begin{proposition}\label{prop:app-local-diagonal-scalar}
Let
\[
(M^{\mathrm{std}}(a)\phi)(x)=|a|^{1/2}\phi(ax).
\]
Then for every $a\in F^\times$,
\[
N(a)\bar N(-a^{-1})N(a)\mathcal F^{-1}
=
\gamma_\psi(2a)\,M^{\mathrm{std}}(a).
\]
\end{proposition}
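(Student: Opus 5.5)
The plan is to compute the operator $N(a)\bar N(-a^{-1})N(a)$ directly as an integral operator, show that it equals $\gamma_\psi(2a)\,M^{\mathrm{std}}(a)\,\mathcal F$, and then compose with $\mathcal F^{-1}$ on the right. By definition $\bar N(-a^{-1})=\mathcal F N(a^{-1})\mathcal F^{-1}$, so the first task is to find the convolution kernel of $\mathcal F N(u)\mathcal F^{-1}$ for $u=a^{-1}$.

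\textbf{Step 1: the kernel of $\mathcal F N(u)\mathcal F^{-1}$.} Since $N(u)$ is multiplication by the chirp $\psi(ut^2/2)$, the conjugate $\mathcal F N(u)\mathcal F^{-1}$ is convolution against the distributional Fourier transform of that chirp. I would read this transform off from the defining identity \eqref{eq:local-gaussian}, applied with the parameter $u/2$ in place of $a$:
\[
\int_F\widehat\phi(t)\,\psi\!\left(\tfrac{u t^2}{2}\right)dt=\gamma_\psi(u/2)\,|u|^{-1/2}\int_F\phi(x)\,\psi\!\left(-\tfrac{x^2}{2u}\right)dx .
\]
This says the distributional Fourier transform of $\psi(ut^2/2)$ is $\gamma_\psi(u/2)|u|^{-1/2}\psi(-x^2/(2u))$. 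Taking $u=a^{-1}$ gives
\[
\bigl(\bar N(-a^{-1})g\bigr)(x)=\gamma_\psi\!\left(\tfrac{1}{2a}\right)|a|^{1/2}\int_F\psi\!\left(-\tfrac{a(x-y)^2}{2}\right)g(y)\,dy .
\]
The inverse-versus-direct Fourier sign does not matter here, because the kernel is even in $x-y$.

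\textbf{Step 2: sandwich by $N(a)$.} Multiplying on both sides by $\psi(ax^2/2)$ and $\psi(ay^2/2)$, the quadratic phases combine as
\[
\tfrac a2x^2-\tfrac a2(x-y)^2+\tfrac a2y^2=axy .
\]
Hence
\[
\bigl(N(a)\bar N(-a^{-1})N(a)g\bigr)(x)=\gamma_\psi\!\left(\tfrac1{2a}\right)|a|^{1/2}(\mathcal Fg)(ax)=\gamma_\psi\!\left(\tfrac1{2a}\right)\bigl(M^{\mathrm{std}}(a)\mathcal Fg\bigr)(x).
\]
Putting $g=\mathcal F^{-1}\phi$ then yields $\gamma_\psi(1/(2a))\,M^{\mathrm{std}}(a)$. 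Finally, Lemma~\ref{lem:square-invariance} gives $\gamma_\psi(1/(2a))=\gamma_\psi(2a\cdot(2a)^{-2})=\gamma_\psi(2a)$. That lemma was stated for $\Q_v$, but its proof is a change of variables that works verbatim for any local $F$.

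\textbf{Main obstacle.} The delicate point is Step 1: passing from the weak identity \eqref{eq:local-gaussian} to an honest convolution formula. I would handle this by evaluating everything against Schwartz test functions. The order of integration $\int\!\!\int$ would be justified with a Gaussian damping factor in the archimedean case, letting the damping tend to zero by dominated convergence. In the $p$-adic case no limit is needed: the integrals are over compact sets once $\phi$ is locally constant and compactly supported.

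I would also double-check the sign conventions, specifically that \eqref{eq:local-gaussian} uses $\widehat\phi$ with $\psi(xy)$, matching $\mathcal F$. Any sign discrepancy would only replace $\gamma_\psi(u/2)$ by $\gamma_\psi(-u/2)$, and this must be tracked to land on $\gamma_\psi(2a)$ rather than $\gamma_\psi(-2a)$. As an independent check, I would alternatively use Schur's lemma, since both sides implement $m(a)$, and then fix the scalar by evaluating both sides on a single test function: the Gaussian $e^{-\pi x^2}$ at $v=\infty$, or $\mathbf 1_{\mathcal O}$ at finite places.
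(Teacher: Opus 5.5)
Your proposal is correct and is essentially the paper's direct Gaussian computation: the same defining identity \eqref{eq:local-gaussian} yields the scalar $\gamma_\psi(1/(2a))|a|^{1/2}$, and the same square-class step $\gamma_\psi(1/(2a))=\gamma_\psi(2a)$ finishes. The only difference is the order of the steps. You first extract the convolution kernel of $\bar N(-a^{-1})$ and then let the three chirps collapse to $\psi(axy)$. The paper instead expands the whole word as a triple integral, substitutes $r=y-as$, uses Fourier inversion in $s$ to force $z=ax$, and evaluates the Gaussian $\int_F\psi(u^2/(2a))\,du$ last. Your ordering has a small advantage: the kernel formula follows pointwise from \eqref{eq:local-gaussian} applied to the Schwartz function $w\mapsto g(x-w)$, so the damping or limiting argument you anticipated as the main obstacle is not actually needed.
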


\begin{proof}
Write
\[
B(a)=N(a)\bar N(-a^{-1})N(a)\mathcal F^{-1}.
\]
Since $\bar N(-a^{-1})=\mathcal F N(a^{-1})\mathcal F^{-1}$, Fourier inversion gives the following identity of tempered distributions; equivalently, insert a Schwartz approximate identity in the $s$-integral below and pass to the limit after testing against a Schwartz function:
\begin{align*}
(B(a)\phi)(x)
&=\psi\!\left(\frac{a x^2}{2}\right)
  \int_F\int_F\int_F
  \phi(z)\\
&\qquad\qquad\cdot
\psi\!\left(\frac{a^{-1}y^2}{2}+\frac{a s^2}{2}-s(z+y)+xy\right)
\,ds\,dz\,dy.
\end{align*}
Put $r=y-as$.  The phase becomes
\[
\frac{r^2}{2a}+xr+s(ax-z)+\frac{a x^2}{2}.
\]
The $s$-integration is Fourier inversion in the self-dual measure and imposes $z=ax$.  The remaining $r$-integral is
\[
\int_F\psi\!\left(\frac{r^2}{2a}+xr+\frac{a x^2}{2}\right)dr
=
\int_F\psi\!\left(\frac{u^2}{2a}\right)du,
\]
after the change of variable $u=r+ax$.  Applying the defining Fourier identity \eqref{eq:local-gaussian} with the quadratic coefficient $1/(2a)$ to a standard Gaussian regularization, and then removing the regularization, gives
\[
\int_F\psi\!\left(\frac{u^2}{2a}\right)du
=
\gamma_\psi\!\left(\frac1{2a}\right)|a|^{1/2}.
\]
Since $(2a)(1/(2a))^{-1}=4a^2$ is a square, square-class invariance gives
\[
\gamma_\psi\!\left(\frac1{2a}\right)=\gamma_\psi(2a).
\]
Therefore
\[
(B(a)\phi)(x)=\gamma_\psi(2a)|a|^{1/2}\phi(ax),
\]
which is the claimed formula.
\end{proof}

\subsection{Weil index and Hilbert symbol local bridge}

For the remainder of the appendix we write $\gamma(a)$ for $\gamma_\psi(a)$ and $\Hilb{a}{b}$ for the Hilbert symbol over $F$.

For a diagonal quadratic form
\[
q=\langle a_1,\dots,a_n\rangle,
\]
write
\[
\det(q)=a_1\cdots a_n\in F^\times/F^{\times 2},
\qquad
h(q)=\prod_{i<j}\Hilb{a_i}{a_j}
\]
for its determinant class and Hasse invariant.

\begin{theorem}[Hasse formula for the Weil index]
\label{thm:app-hasse-weil}
For every nondegenerate diagonal quadratic form $q=\langle a_1,\dots,a_n\rangle$ over $F$,
\[
\gamma(q)=\gamma(1)^{n-1}\gamma(\det(q))h(q).
\]
In particular,
\[
\gamma(\langle a,b\rangle)=\gamma(1)\gamma(ab)\Hilb{a}{b}.
\]
\end{theorem}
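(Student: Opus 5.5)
The plan is to reduce everything to the binary case and prove that directly from Weil's structural properties of the index. I cannot use Theorem~\ref{thm:local-mult} here, since it is derived from this theorem.

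I will use three standard facts from Weil's theory, with the normalization \eqref{eq:local-gaussian}:
\begin{enumerate}[label=(\alph*)]
\item Multiplicativity on orthogonal sums, $\gamma(q\oplus q')=\gamma(q)\gamma(q')$. This holds because the Gaussian integral for $q\oplus q'$ factors as a product over the two summands with the product self-dual measure.
\item Invariance under isometry. This follows from the change of variables argument of Lemma~\ref{lem:square-invariance}.
\item The hyperbolic plane has index $1$. In particular $\gamma(a)\gamma(-a)=1$, which one can also see directly because $\psi(-ax^2)$ is the complex conjugate of $\psi(ax^2)$.
\end{enumerate}
By (a), $\gamma(\langle a_1,\dots,a_n\rangle)=\prod_i\gamma(a_i)$, so the general formula follows from the binary one by induction on $n$. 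Write $d=a_1\cdots a_{n-1}$. The binary identity gives
\[
\gamma(d)\gamma(a_n)=\gamma(1)\gamma(da_n)\Hilb{d}{a_n}.
\]
By bilinearity, $\Hilb{d}{a_n}=\prod_{i<n}\Hilb{a_i}{a_n}$, and this is exactly the new factor of the Hasse invariant $h(q)$ when passing from $n-1$ to $n$ variables. The factor $\gamma(1)^{n-1}$ accumulates one $\gamma(1)$ per step.

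For the binary case, set
\[
f(a,b)=\frac{\gamma(a)\gamma(b)}{\gamma(1)\gamma(ab)}.
\]
Apply (a) and (c) to the four-dimensional norm form $Q_{a,b}=\langle 1,-a,-b,ab\rangle$ of the quaternion algebra $(a,b)_F$:
\[
\gamma(Q_{a,b})=\gamma(1)\gamma(-a)\gamma(-b)\gamma(ab)=\frac{\gamma(1)\gamma(ab)}{\gamma(a)\gamma(b)}=f(a,b)^{-1}.
\]
It therefore suffices to show $\gamma(Q_{a,b})=\Hilb{a}{b}$, and I would split into two cases.
\begin{itemize}
\item If $\Hilb{a}{b}=1$, then $b$ is a norm from $F(\sqrt a)$, so $\langle 1,-a\rangle$ represents $b$. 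Hence $\langle1,-a\rangle\cong\langle b,-ab\rangle$, and $Q_{a,b}$ is a sum of two hyperbolic planes, so $\gamma(Q_{a,b})=1$ by (b) and (c).
\item If $\Hilb{a}{b}=-1$, then $Q_{a,b}$ is the reduced norm form of the division quaternion algebra $D$ over $F$. What is needed is Weil's theorem that the anisotropic four-dimensional form has index $-1$.
\end{itemize}

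I expect this last point to be the main obstacle. My first attempt would be to cite Weil's \emph{Acta} paper, where it is shown that $q\mapsto\gamma(q)$ induces a character of the Witt group taking the value $-1$ on the class of the quaternion norm form. As a self-contained fallback, I would give a direct check by place.
\begin{itemize}
\item Over $\R$, the only case is $D=\mathbb H$ with $Q=\langle1,1,1,1\rangle$ up to sign, and $\gamma(1)^{\pm4}=e^{\pm 2\pi i/2}=-1$ from the real Fresnel integral.
\item For odd $p$, I would compute $\gamma(u)$ and $\gamma(pu)$ for units $u$ via normalized Gauss sums mod $p$, and then verify the identity directly on the square-class representatives $\{1,\epsilon,p,\epsilon p\}$.
\item For $p=2$, the same Gauss-sum computation applies modulo $8$ on the representatives $\pm1,\pm5,\pm2,\pm10$. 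This is finite but tedious.
\end{itemize}
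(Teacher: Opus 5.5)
Your proposal is correct, and it follows the route the paper only sketches before deferring to Weil and Rao: diagonalize, use multiplicativity under orthogonal sums, and compare with the classification of local quadratic forms. Your version makes explicit the one non-formal input that the classification cannot supply by itself, namely $\gamma=-1$ on the anisotropic quaternion norm form $Q_{a,b}$. Since all anisotropic $Q_{a,b}$ over a given $F$ are isometric, your fallback for $F=\Q_v$ needs only one check per place, for example $\gamma_\infty(1)^4=\gamma_2(1)^4=-1$ for $\langle 1,1,1,1\rangle$, and $\gamma_p(-p)\gamma_p(\epsilon p)=-1$ at odd $p$. This avoids the tedious mod-$8$ case analysis.
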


\begin{proof}
This is the standard Hasse-invariant formula for Weil indices; see Weil~\cite{Weil}, Rao~\cite{Rao}, or Serre~\cite[Ch.~V, \S3]{SerreLocal}.  With the present normalization it is obtained by diagonalizing $q$, using multiplicativity of the oscillatory Gaussian integral under orthogonal direct sums, and comparing the resulting invariant with the local classification of quadratic forms by dimension, determinant, and Hasse invariant.  The binary specialization is the case $n=2$, where $\det\langle a,b\rangle=ab$ and $h(\langle a,b\rangle)=\Hilb{a}{b}$.
\end{proof}

\begin{theorem}[Standard local bridge]
\label{thm:app-local-bridge}
For every $a,b\in F^\times$,
\[
\gamma(a)\gamma(b)=\gamma(1)\gamma(ab)\Hilb{a}{b}.
\]
\end{theorem}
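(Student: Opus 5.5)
The plan is to deduce the binary identity from Theorem~\ref{thm:app-hasse-weil} together with the fact that the Weil index is multiplicative under orthogonal direct sums. Concretely, I will compute $\gamma(\langle a,b\rangle)$ in two ways and compare.

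\emph{First evaluation.} The quadratic form $\langle a,b\rangle$ on $F^2$ is the orthogonal sum $\langle a\rangle\perp\langle b\rangle$. Take a product test function $\phi(x_1,x_2)=\phi_1(x_1)\phi_2(x_2)$ in the two-variable analogue of \eqref{eq:local-gaussian}. The self-dual measure on $F^2$ is the product measure, and the two-variable Fourier transform factors. So the Gaussian integral for $\langle a,b\rangle$ factors as the product of the one-variable integrals for $\langle a\rangle$ and $\langle b\rangle$. The normalizing prefactors also multiply correctly, since $|\det(2q)|^{-1/2}=|2a|^{-1/2}|2b|^{-1/2}$. Because such products span a dense subspace and the Weil index is uniquely determined, we get
\[
\gamma(\langle a,b\rangle)=\gamma(a)\gamma(b).
\]

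\emph{Second evaluation.} The binary case $n=2$ of Theorem~\ref{thm:app-hasse-weil} gives
\[
\gamma(\langle a,b\rangle)=\gamma(1)\gamma(ab)\Hilb{a}{b},
\]
using $\det\langle a,b\rangle=ab$ and $h(\langle a,b\rangle)=\Hilb{a}{b}$. Equating the two expressions proves the theorem.

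\emph{Main obstacle.} The real content is hidden in Theorem~\ref{thm:app-hasse-weil}, since its proof rests on the classification of local quadratic forms. If I wanted a more self-contained argument for the binary case, I would use the following route.
\begin{enumerate}
\item The quantity $c(a,b):=\gamma(a)\gamma(b)/(\gamma(1)\gamma(ab))$ depends only on square classes, by Lemma~\ref{lem:square-invariance}, and is clearly symmetric in $a$ and $b$.
\item It satisfies $c(a,b)=1$ whenever $b$ is a norm from $F(\sqrt a)$. To see this, when $z^2=ax^2+by^2$ is solvable the form $\langle a,b\rangle$ is isometric to $\langle 1,ab\rangle$ (it represents $1$ and has determinant $ab$). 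Since the Weil index is an isometry invariant, as one sees by changing variables in the Gaussian integral, the two sides agree.
\item For the converse direction, one must show $c(a,b)=-1$ when $\langle a,b\rangle$ does not represent $1$. Here I would use that $\langle a,b,-1,-ab\rangle$ is then the anisotropic quaternion norm form, and that the Weil index of a hyperbolic plane is $1$.
\end{enumerate}
Step 3 is the delicate point. In any case, I would cite the theorem for the sign-determining statement and present only the factorization argument in detail.
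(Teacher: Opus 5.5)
Your proof is correct and follows the paper's argument exactly. The paper also writes $\gamma(a)\gamma(b)=\gamma(\langle a,b\rangle)$ by orthogonal-sum multiplicativity and then applies the binary case of Theorem~\ref{thm:app-hasse-weil}; your product-test-function computation fills in the multiplicativity step that the paper cites without detail, and the self-contained aside is not needed (as you note, its Step~3 would still require an independent evaluation of the Weil index of the anisotropic quaternary form).
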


\begin{proof}
By orthogonal-sum multiplicativity of the Weil index,
\[
\gamma(a)\gamma(b)=\gamma(\langle a,b\rangle).
\]
Applying Theorem~\ref{thm:app-hasse-weil} to the binary form $\langle a,b\rangle$ gives
\[
\gamma(\langle a,b\rangle)=\gamma(1)\gamma(ab)\Hilb{a}{b}.
\]
Combining the two displayed equations proves the result.
\end{proof}

\begin{remark}
The invariant formula avoids any auxiliary choice of square class and includes the real case without a separate argument.
\end{remark}

\subsection{Consequences for the main text}

Theorem~\ref{thm:app-rank-two} supplies the proof of Theorem~\ref{thm:maslov-weil} in the body of the paper.  Proposition~\ref{prop:app-local-diagonal-scalar} supplies the local scalar for the Bruhat word used in Section~\ref{sec:global-product}.  Theorem~\ref{thm:app-local-bridge} supplies the proof of Theorem~\ref{thm:local-mult}.  Together these local results give the inputs used in Section~\ref{sec:global-product} to prove the global cancellation of the local Maslov defects.

\end{document}